\newtheorem{thm}{Theorem}[section]
\newtheorem{Lemma}[thm]{Lemma}
\newtheorem{prop}[thm]{Proposition}
\newtheorem{conj}[thm]{Conjecture}
\newtheorem{question}[thm]{Question}
\newcommand{\beq}[1]{\begin{equation}\label{#1}}
\newcommand{\enq}[0]{\end{equation}}
\newcommand{\bn}[0]{\bigskip\noindent}
\newcommand{\mn}[0]{\medskip\noindent}
\newcommand{\nin}[0]{\noindent}
\newcommand{\sub}[0]{\subseteq}
\newcommand{\sm}[0]{\setminus}
\renewcommand{\dots}[0]{,\ldots,}
\newcommand{\ov}[0]{\overline}
\newcommand{\B}[0]{{\cal B}}
\newcommand{\cee}[0]{{\cal C}}
\newcommand{\f}[0]{{\cal F}}
\newcommand{\h}[0]{{\cal H}}
\newcommand{\pee}[0]{{\cal P}}
\newcommand{\U}[0]{{\cal U}}
\newcommand{\X}[0]{{\cal X}}
\newcommand{\Y}[0]{{\cal Y}}
\newcommand{\Ra}[0]{\Rightarrow}
\newcommand{\lra}[0]{\leftrightarrow}
\newcommand{\xx}[0]{t}
\newcommand{\E}[0]{{\sf E}}
\newcommand{\0}[0]{\emptyset}
\renewcommand{\qed}[0]{\begin{flushright} \rule{2mm}{3mm} \end{flushright}}
\def\qqed{\null\nobreak\hfill\hbox{${\diamondsuit}$}\par\smallskip}
\newcommand{\C}[0]{\binom}
\newcommand{\ga}[0]{\alpha }
\newcommand{\gb}[0]{\beta }
\newcommand{\gc}[0]{\gamma }
\newcommand{\gd}[0]{\delta }
\newcommand{\gD}[0]{\Delta }
\newcommand{\gG}[0]{\Gamma }
\newcommand{\gk}[0]{\kappa }
\newcommand{\go}[0]{\omega}
\newcommand{\gO}[0]{\Omega}
\newcommand{\gs}[0]{\sigma}
\newcommand{\eps}[0]{\varepsilon }
\newcommand{\covby}[0]{<\hspace{-0.09in}\cdot~}
\newcommand{\uk}[0]{\underline{k}}
\begin{document}

\renewcommand{\thefootnote}{\fnsymbol{footnote}}
\footnotetext{AMS 2000 subject classification:  05A16, 05C30, 68R99}
\footnotetext{Key words and phrases:  2-SAT function,
odd-blue-triangle-free graph, asymptotic enumeration
}
\title{On the Number of 2-SAT Functions\footnotemark }

\author{
L. Ilinca and J. Kahn}
\date{}

\footnotetext{ * Supported by NSF grant DMS0701175.}

\maketitle

\begin{abstract}
We give an alternate proof of a conjecture of Bollob\'as, Brightwell and
Leader, first proved by Peter Allen, stating that
the number of boolean functions definable  by
2-SAT formulae is $(1+o(1))2^{\C{n+1}{2}}$.
One step in the proof determines the asymptotics of the number of
``odd-blue-triangle-free" graphs on $n$ vertices.
\end{abstract}

\section{Introduction}

Let $\{x_1,\ldots,x_n\}$ be a collection of Boolean variables. Each variable $x$ is associated with a ${\it positive}$ literal, $x$, and a ${\it negative}$ literal, $\bar{x}$. Recall that a {\it k-SAT formula} is an expression of the form
\beq{kS}
C_1\wedge \cdots \wedge C_t,
\enq
with $t$ a positive integer and each $C_i$ a {\em k-clause}; that is, an expression
$y_1\vee\cdots \vee y_k$, with $y_1\dots y_k$ literals corresponding to
different variables.
A formula (\ref{kS}) defines a Boolean function of $x_1\dots x_n$ in the obvious way;
any such function is a {\em k-SAT function}.
Here we will be concerned almost exclusively with the case $k=2$,
and henceforth write ``clause" for ``2-clause."

We are interested in the number of 2-SAT functions of $n$ variables,
which, following \cite{BBL}, we denote $G(n)$.
Of course $G(n)$ is at most
$\exp_2[4\C{n}{2}]$,
the number of 2-SAT formulae; on the other hand it's
easy to see that
\beq{2S}
G(n) > 2^n(2^{\C{n}{2}}-1) \sim 2^{\C{n+1}{2}}
\enq
(all formulas gotten by choosing
$y_i\in \{x_i,\ov{x}_i\}$ for each $i$
and a nonempty subset of the clauses using $y_1\dots y_n$ give different
functions).

The problem of estimating $G(n)$ was suggested by
Bollob\'as, Brightwell and Leader \cite{BBL}
(and also, according to \cite{BBL}, by U. Martin).
They showed that
\beq{bbl}
G(n)=\exp_2[(1+o(1))n^2/2],
\enq
and made the natural conjecture that (\ref{2S}) gives the
asymptotic value of $G(n)$; this was proved in \cite{Allen}:
\begin{thm}
\label{SATthm}
\begin{equation}
\label{SAT}
G(n)=(1+o(1))2^{\C{n+1}{2}}
\end{equation}
\end{thm}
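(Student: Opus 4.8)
The plan is to show $G(n) \le (1+o(1))2^{\binom{n+1}{2}}$, since the matching lower bound is already given by~(\ref{2S}). The natural framework is to encode a 2-SAT function by a graph-like object and then count such objects. First I would recall the standard observation that a 2-SAT formula can be represented by a labelled graph (or partial structure) on the vertex set $\{x_1,\ldots,x_n,\bar x_1,\ldots,\bar x_n\}$: a clause $y \vee z$ corresponds to the pair of ``implications'' $\bar y \to z$ and $\bar z \to y$, so a formula is essentially a digraph on $2n$ vertices with the skew-symmetry $u\to v \iff \bar v \to \bar u$. The key issue is that many formulae define the same function, so one must pass to a canonical (e.g.\ transitively closed, or ``maximal'') representative of each function. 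I would introduce the reduction, already implicit in \cite{BBL} and \cite{Allen}, that associates to each 2-SAT function an undirected graph $H$ on $n$ vertices recording which pairs of variables appear ``linked,'' together with a bounded amount of extra sign/orientation data, in such a way that the number of functions is at most the number of admissible pairs (graph, extra data). The admissible graphs are exactly the \emph{odd-blue-triangle-free} graphs promised in the abstract.

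The main body of the argument is then a \emph{stability/entropy count}. Write $H$ for the relevant $n$-vertex graph attached to a function; the crude bound is $G(n) \le \sum_H 2^{e(H)} \cdot (\text{extra data})$, and since $e(H) \le \binom n2$ this only gives $2^{(1+o(1))n^2/2}$ — matching~(\ref{bbl}) but not~(\ref{SAT}). To get the sharp constant one has to argue that graphs $H$ with $e(H)$ close to $\binom n2$ dominate the sum, and that for such $H$ the odd-blue-triangle-free constraint forces enough structure to kill the extra polynomial/subexponential factors. Concretely, I would (i) determine the asymptotic number of odd-blue-triangle-free graphs on $n$ vertices — this is the step the abstract singles out — showing it is $2^{\binom n2 + o(n^2)}$ with the dominant contribution coming from graphs that are complete or nearly complete; (ii) show, via a supersaturation or container-type argument, that the number of $H$ with $e(H) \le \binom n2 - t$ decays fast enough in $t$ that $\sum_H 2^{e(H)} = (1+o(1))2^{\binom n2}$; and (iii) check that the number of choices of extra (sign/orientation) data consistent with a near-complete $H$ contributes only a factor $(1+o(1))2^n$, giving the total $(1+o(1))2^{\binom n2 + n} = (1+o(1))2^{\binom{n+1}{2}}$.

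The hard part will be step (ii) together with the interplay between $e(H)$ and the number of functions realized over a given $H$: it is not enough that few graphs have small edge count, one needs that when $H$ is far from complete the odd-blue-triangle-free condition is genuinely restrictive, so the loss in $2^{e(H)}$ is not compensated by a gain in the number of compatible 2-SAT functions or extra data. This requires a careful structural analysis of odd-blue-triangle-free graphs of near-extremal size — essentially a Kruskal–Katona / stability statement for the relevant hypergraph of forbidden configurations — and is presumably where the bulk of \cite{Allen}'s difficulty (and the present paper's alternate route) lies. A secondary technical obstacle is making the ``canonical representative'' map well-defined and injective enough: one must ensure that distinct functions do not collapse to the same (graph, data) pair, which typically forces working with transitively closed implication digraphs and checking that the closure operation is compatible with the odd-blue-triangle-free reduction. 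Once the enumeration of odd-blue-triangle-free graphs is in hand and the decay estimate in $t$ is established, assembling~(\ref{SAT}) is routine.
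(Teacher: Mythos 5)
There is a genuine gap, and it lies in the accounting at the heart of your steps (i)--(iii). You propose to bound $G(n)$ by $\sum_H 2^{e(H)}\cdot(\mbox{extra data})$, with the sum dominated by nearly complete admissible graphs $H$, so that a stability/Kruskal--Katona argument around the complete graph would give $\sum_H 2^{e(H)}=(1+o(1))2^{\binom{n}{2}}$. But the extremal structure is not near-complete graphs: the dominant 2-SAT functions are exactly those of the lower-bound construction (\ref{2S}), obtained by choosing a sign for each variable ($2^n$ choices) and an arbitrary nonempty set of clauses on the chosen literals ($2^{\binom{n}{2}}$ choices). Hence \emph{every} graph $H$ occurs as the linked-pairs graph of many functions, and the typical such $H$ has about $\tfrac12\binom{n}{2}$ edges. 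With all $H$ admissible, $\sum_H 2^{e(H)}=3^{\binom{n}{2}}$, so your step (ii) is false as stated and cannot be repaired by any supersaturation argument; the factor $2^{\binom{n}{2}}$ must be attributed to the choice of the graph itself, with only a global factor of roughly $2^{n-1}$ for the sign/coloring data (the blue-bipartition, as in (\ref{BB})) and a \emph{bounded multiplicity per graph}, not a per-edge factor $2^{e(H)}$.

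That multiplicity control is the second missing ingredient, and it is where most of the actual work lies. Your step (iii) asserts that the ``extra sign/orientation data'' contributes only $(1+o(1))2^n$, but gives no mechanism. In the paper this is the statement that the number of posets in $\pee(n)$ (equivalently, elementary functions) with a given cover graph $G$ is at most $2$ when $G$ is blue-bipartite and triangle-connected (Lemma \ref{L2ors}, orientation rigidity propagating through shared triangles), together with separate arguments beating down the contributions of non-triangle-connected blue-bipartite graphs (the entropy/decision-tree count of Lemma \ref{Llast}) and of non-blue-bipartite graphs (Lemma \ref{LE'} combined with the quantitative bound of Theorem \ref{MC}). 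Relatedly, your step (i) only asks for the OBTF count to within $2^{o(n^2)}$, which is the bound already in \cite{BBL} and yields only (\ref{bbl}); the proof of (\ref{SAT}) needs the sharp asymptotics $F(n)=(1+o(1))2^{\binom{n+1}{2}-1}$ of Theorem \ref{OBTFthm}, and in fact the stronger per-vertex statement of Theorem \ref{MT}, because one can no longer afford the crude bound (\ref{trivialUB}) on the number of functions per graph. So while your starting reduction (implication digraph, OBTF graphs) matches the paper, the proposed counting scheme is aimed at the wrong extremal structure and the two genuinely hard steps---the sharp ``almost all OBTF graphs are blue-bipartite'' statement and the control of $|\pee(G)|$---are not supplied.
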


\nin
Here we give an alternate proof.  An interesting feature of our argument
is that it follows the original colored graph approach of \cite{BBL}, in
the process determining (Theorem \ref{OBTFthm}) the asymptotics of the number of
``odd-blue-triangle-free" graphs on $n$ vertices;
both \cite{BBL} and \cite{Allen} mention the seeming difficulty of
proving Theorem \ref{SATthm} along these lines.

The argument of \cite{BBL} reduces (\ref{bbl}) to estimation of the number of
``odd-blue-triangle-free" (OBTF) graphs (defined below).
In brief, with elaboration below, this goes
as follows.
Each ``elementary" 2-SAT function (non-elementary functions are easily disposed of)
corresponds to an OBTF graph; this correspondence is not injective, but the number
of functions mapping to a given graph is trivially $\exp[o(n^2)]$, so that
a bound $\exp_2[(1+o(1))n^2/2]$ on the number, say $F(n)$,
of OBTF graphs on $n$ vertices---proving which is the main occupation
of \cite{BBL}---gives (\ref{bbl}).

The Bollob\'as {\em et al.}
reduction to OBTF graphs is also the starting point for the
proof of Theorem \ref{SATthm}, and a
second main point here (Theorem \ref{OBTFthm}) will be
determination of the asymptotic behavior of $F(n)$.
Note, however, that derivation of Theorem \ref{SATthm} from this is---in contrast to
the corresponding step in \cite{BBL}---not at all straightforward,
since we can no longer afford a crude bound on the number of 2-SAT functions
corresponding to a given OBTF $G$.

It's natural to try to attack the problem of (approximately) enumerating OBTF graphs
using ideas from the large literature on asymptotic enumeration in the spirit
of \cite{EKR}, for instance \cite{KR} and \cite{PS}.
This is suggested in \cite{BBL}; but the authors
say their attempts in this direction were not successful, and their
eventual treatment of $F(n)$
is based instead---as is Allen's proof of Theorem \ref{SATthm}---on
the Regularity Lemma of Endre Szemer\'edi \cite{Szemeredi}.
Here our arguments will be very much in the spirit of the papers mentioned;
\cite{PS} in particular was helpful in providing some initial inspiration.
We now turn to more precise descriptions.

\bigskip
We consider {\em colored}
graphs, meaning graphs with edges colored {\em red} ($R$) and {\em blue} ($B$).
For such a graph $G$, a subset of $E(G)$ is
{\it odd-blue} if it has an odd number of {\it blue} edges (and {\em even-blue}
otherwise),
and (of course) $G$ is {\it odd-blue-triangle-free} (OBTF) if it contains
no odd-blue triangle.
We use $\mathcal{F}(n)$ for the set of (labelled) OBTF graphs on $n$ vertices
and set $|\f(n)| = F(n)$.

A graph $G$ (colored as above) is
{\it blue-bipartite} (BB) if there is a partition $U\sqcup W$ of $V(G)$
such that each blue edge has one endpoint in each of $U$, $W$, while any red
edge is contained in one of $U$, $W$.
We use $\mathcal{B}(n)$ for the set of blue-bipartite graphs on $n$ vertices.
%
It is easy to see that
\begin{equation}
\label{BB}
|\mathcal{B}(n)|=(1-o(1))2^{\C{n+1}{2}-1}.
\end{equation}
(The term $\exp_2[\C{n+1}{2}-1]=\exp_2[n-1+\C{n}{2}]$
counts ways of choosing the unordered pair
$\{U,W\}$ and an {\em uncolored} $G$, the coloring then being dictated
by ``blue-biparticity";
that the r.h.s. of (\ref{BB}) is a {\em lower} bound follows from the observation
that almost all such choices will have $G$ connected, in which case
different $\{U,W\}$'s give different colorings.)

As mentioned above, the main step in the proof of (\ref{bbl})
in \cite{BBL}
 was
a bound $F(n) < \exp_2[(1+o(1))n^2/2]$; here we prove
the natural conjecture that most OBTF graphs are blue-bipartite:

\begin{thm}
\label{OBTFthm}
\begin{equation}
\label{OBTFtoBB}
F(n)=(1+o(1))2^{\C{n+1}{2}-1}
\end{equation}
\end{thm}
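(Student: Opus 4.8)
\mn
\textbf{Proof proposal.}
Since $\B(n)\sub\f(n)$ --- in a blue-bipartite graph a triangle has either all three vertices on one side, and so no blue edge, or two on one side and one on the other, and so exactly two blue edges, hence is never odd-blue --- the lower bound in (\ref{OBTFtoBB}) is just (\ref{BB}), and the whole content is the upper bound $|\f(n)\sm\B(n)|=o(2^{\C{n+1}{2}})$. I would organize the count by the underlying uncoloured graph. For an uncoloured $H$ on $[n]$, identify a colouring of $H$ with a vector $w\in\FF_2^{E(H)}$ (blue $=1$). Then $(H,w)$ is OBTF iff $w$ is orthogonal to the indicator in $\FF_2^{E(H)}$ of every triangle of $H$, so the number of OBTF colourings of $H$ is $c(H):=2^{|E(H)|-r(H)}$ with $r(H)=\dim Z_\triangle(H)$, where $Z_\triangle(H)\sub\FF_2^{E(H)}$ is the span of the triangle indicators; and $(H,w)$ is blue-bipartite iff $w$ lies in the cut space of $H$, so the number of such colourings is $b(H)=2^{n-k(H)}$, $k(H)$ the number of components of $H$. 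As triangles are cycles, $Z_\triangle(H)$ lies in the cycle space $Z(H)$, so $b(H)\le c(H)$, with equality precisely when $Z_\triangle(H)=Z(H)$ --- call such $H$ \emph{good}; put $d(H)=\dim(Z(H)/Z_\triangle(H))$, so $c(H)=2^{d(H)}b(H)$. Since $F(n)=\sum_H c(H)$ and $|\B(n)|=\sum_H b(H)$, it suffices to prove $\sum_{H\text{ bad}}(c(H)-b(H))=\sum_{H\text{ bad}}2^{n-k(H)}(2^{d(H)}-1)=o(2^{\C{n+1}{2}})$.

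A preliminary step discards sparse $H$: for a small fixed $\eta>0$, $\sum_{|E(H)|\le\eta n^2}c(H)\le\sum_{|E(H)|\le\eta n^2}2^{|E(H)|}\le 2^{(1-\gO(1))\C n2}$, which uses no structure. So assume $H$ dense. The heuristic is that dense $H$ are good: if every vertex has degree $\ge(1-\eps)n$, then fixing $v_0$ and setting $f(u)=w(v_0u)$ for $u\in N(v_0)$, the triangle relations force $w$ to equal the cut of $f$ on the edges inside $N(v_0)$; two further rounds of the same argument --- one using that each remaining vertex has many neighbours in $N(v_0)$, one using that each remaining pair has a common neighbour there --- extend $f$ to $[n]$ and make $w$ a cut. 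So bad dense $H$ have a vertex of degree $<(1-\eps)n$, and the point is to bound their weighted contribution.

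The structural fact behind this is that a bad $H$ carries a chordless witness: a shortest cycle of $H$ not in $Z_\triangle(H)$ is induced (a chord would express it as a sum of two shorter cycles, at least one again outside $Z_\triangle(H)$) and has length $\ell\ge4$ (triangles lie in $Z_\triangle(H)$); thus a bad $H$ contains, for some $\ell\ge4$, an induced $\ell$-cycle not in the triangle span. Fixing such an $\ell$-set in a given cyclic order prescribes the adjacencies among all $\C{\ell}{2}$ of its pairs --- a factor $2^{-\C{\ell}{2}}$ against the free $2^{\C n2}$ --- and ``not in $Z_\triangle(H)$'' further restricts how outside vertices attach (for $\ell=5$ it essentially forbids any vertex adjacent to all five). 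There are at most $\C n\ell(\ell-1)!/2$ placements of the witness, and for each one must bound $\sum_{H\supseteq\text{witness}}(c(H)-b(H))$, which needs, on top of the $2^{-\C{\ell}{2}}$ saving, that $d(H)=O(1)$ for most such $H$ and that the attachment restrictions supply a further $\gO_\ell(n)$-bit saving.

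The crux, I expect, is to make these estimates uniform in $\ell$. For bounded $\ell$ it is routine, but for $\ell$ of order $\log_2 n$ the number $\C n\ell(\ell-1)!/2$ of placements, times the number $2^{\C n2-\C{\ell}{2}}$ of ways to fill in the other edges, already exceeds $2^{\C n2}$ --- by a factor of order $n^\ell 2^{-\C{\ell}{2}}=2^{\gO((\log n)^2)}$ --- so the necessary saving must be extracted quantitatively from the non-fillability of the $\ell$-hole, essentially a sharp non-asymptotic version of the statement that the triangle complex of a dense graph is simply connected. Obtaining such a bound, while simultaneously controlling $d(H)$ (equivalently, the number of essentially independent holes) when a bad $H$ carries several witnesses, is where the real work lies; this is exactly the passage that \cite{BBL} and \cite{Allen} describe as the difficult one.
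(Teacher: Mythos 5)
Your reduction is correct and appealing as a reformulation: counting over uncoloured $H$, the OBTF colourings form the orthogonal complement of the triangle space and the blue\mbox{-}bipartite ones the cut space, so the theorem is exactly the assertion $\sum_{H\ \mathrm{bad}}2^{n-k(H)}(2^{d(H)}-1)=o(2^{\C{n+1}{2}})$; the lower bound and the removal of graphs with at most $\eta n^2$ edges are fine. But the proposal stops precisely where the theorem starts: that sum over bad dense $H$ is the whole content, and you do not prove it --- you say yourself that the uniform-in-$\ell$ estimate and the control of $d(H)$ are ``where the real work lies.'' Concretely: (1) the induced-cycle-witness union bound loses a factor of order $n^{\ell}2^{-\C{\ell}{2}}$, which is huge for every $\ell$ up to roughly $2\log_2 n$, and the compensating gain you hope to extract from ``the $\ell$-hole cannot be filled by triangles'' is neither formulated nor established (for $\ell=4,5$ one can write it down; for growing $\ell$ it is a genuinely hard quantitative statement); (2) $d(H)$ is not $O(1)$ in general --- for dense nearly bipartite $H$ one has $c(H)=2^{|E(H)|}$, so the weight $2^{d(H)}$ can be as large as $2^{\Theta(n^2)}$, and handling those $H$ requires enumeration of triangle-sparse graphs with second-order precision, since any $2^{o(n^2)}$ slack already swamps the gap of only $2^{n/2}$ between $2^{\C{n}{2}}$ and the target $2^{\C{n+1}{2}-1}$; (3) the ``dense implies good'' step, as stated with minimum degree $(1-\eps)n$, applies to a vanishing proportion of graphs (a typical graph has all degrees near $n/2$), so the dichotomy it gives is essentially vacuous and does no counting work. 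So this is an outline of a strategy, with the central estimate missing, rather than a proof.

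For comparison, the paper takes a different route that sidesteps unbounded witnesses altogether. It stays with the coloured graph and the vertex-deletion parameter $\gk(G)$, isolates a few anomaly classes $\X_0,\dots,\X_3$ (sets with small neighbourhood, low-degree vertices, vertex pairs with small common neighbourhood or many ``inconsistent'' vertices), and counts each class against $F(n-t)$ or $F(n-2t)$, the saving coming vertex-by-vertex from the OBTF condition cutting the number of admissible colourings of $\nabla(z,\{x_i,y_i\})$ from $5$ or $4$ down to $4$ or $3$ on a linear number of $z$'s. Lemma \ref{lbb} then shows that outside these classes, deleting a set $K(G)$ of logarithmic size leaves a graph in which any two vertices have many common neighbours, so blue-biparticity only requires excluding odd-blue cycles of length $4$ and $5$; Theorem \ref{OBTFthm} follows by induction on $n$ from the resulting recursive bound (Theorem \ref{MT}), which moreover gives the quantitative strengthening (Theorem \ref{MC}) needed later for Theorem \ref{SATthm}. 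To complete your route you would need a quantitative substitute for exactly this machinery.
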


\noindent
The bound here corresponds to that in Theorem \ref{SATthm},
in that (as explained below) one expects a typical OBTF $G$ to correspond to
exactly two 2-SAT functions.
Proving that this is indeed the case, and
controlling the contributions of those
OBTF $G$'s for which the number is larger, are the
main concerns of Sections 4 and 5 (which handle blue-bipartite and
non-blue-bipartite $G$ respectively).
These are preceded by a review, in
Section 2, of the reduction from 2-SAT functions to OBTF graphs,
and, in Section 3, the proof of
Theorem \ref{OBTFthm} in a form that gives some further limitations on
graphs in $\f(n)\sm \B(n)$.
The end of the proof of Theorem \ref{SATthm} is given in Section 6,
and Section 7 contains a few additional remarks and questions.

\mn
{\em Numerical usage.}
We use $[n]$ for $\{1\dots n\}$ and
$\C{n}{<k}$ for $\sum_{i=0}^{k-1}\C{n}{i}$.
All logarithms and the entropy $H(\cdot)$ are binary.
We pretend throughout that large numbers are integers.

\mn
{\em Graph-theoretic usage.}
We use
$\Gamma_x$ or $\Gamma(x)$ for the neighborhood of a vertex $x$,
preferring the former but occasionally resorting to the latter
for typographical reasons
(to avoid double subscripts or because we need the subscript to
specify the graph).
For a set of vertices $Q$, $\Gamma(Q)$ is $\cup_{x\in Q}\Gamma_x\sm Q$.
We use $\nabla(X,Y)$ for the set of edges having one end in $X$
and the other in $Y$ ($X$ and $Y$ will usually be disjoint,
but we don't require this).

\section{Reduction to OBTF graphs}

In this section we recall what we need of the reduction
from 2-SAT functions to OBTF graphs, usually
referring to \cite{BBL} for details.

The {\it spine} of a non-trivial 2-SAT function $S$ is the set of variables that
take only one value (True or False) in
satisfying assignments for $S$.
For a 2-SAT function $S$ with empty spine, we say that variables $x,y$ are
{\it associated} if either $x\Leftrightarrow y$ is True in all satisfying
assignments for $S$, or $x\Leftrightarrow \bar{y}$ has this property.
A 2-SAT function with empty spine and no associated pairs is {\it elementary}.
As shown in \cite{BBL},
the number, $H(n)$, of elementary, $n$-variable 2-SAT function satisfies
$$
H(n)\leq G(n)\leq 1+\sum_{k=0}^{n}H(n-k)\C{n}{k}(2n-2k+2)^k,
$$
and it follows that for Theorem \ref{SATthm}
it is enough to show
\begin{equation}
\label{SATe2}
H(n)=(1+o(1))2^{\C{n+1}{2}}.
\end{equation}

Given a 2-SAT formula $F$ giving rise to an elementary function $S_F$, we
construct a partial order $P_F$ on
$\{x_1,\bar{x}_1,\ldots,x_n,\bar{x}_n\}$,
by setting $x<y$ if $\bar{x}\vee  y$ appears in $F$
(so $x\Rightarrow y$ is True in any satisfying assignment for $F$;
note $x,y$ can be positive or negative literals),
and taking the transitive closure of this relation. Then $P_F$ is
indeed a poset and satisfies

\begin{enumerate}
\item [(a)] $P_F$ depends only on the function $S_F$,
\item [(b)] each pair $x$, $\bar{x}$ is incomparable, and
\item [(c)] $x<y$ if and only if $\bar{y}<\bar{x}$.
\end{enumerate}
This construction turns out to give a bijection between the set of
elementary 2-SAT functions and the set $\mathcal{P}(n)$ of posets
on $\{x_1,\bar{x}_1,\ldots,x_n,\bar{x}_n\}$
satisfying (b) and (c),
and in proving (\ref{SATe2}) we work with
the interpretation $H(n)=|\mathcal{P}(n)|$.

For $P\in \mathcal{P}(n)$ we construct a colored graph $G(P)$ on (say) vertex set $\{w_1,\ldots,w_n\}$ by including a {\it red} edge $w_iw_j$ whenever
$x_i\covby\bar{x}_j$ or $\bar{x}_i\covby x_j$ in $P$
(where, as usual, $x\covby y$ means $x<y$ and there is no $z$ with $x<z<y$),
and a {\it blue} edge $w_iw_j$ whenever $x_i\covby x_j$ or
$\bar{x}_i\covby \bar{x}_j$.
Then

\begin{enumerate}
\item[(d)] no edge of $G(P)$ is colored both red and blue,
\item[(e)] $G(P)$ determines the $cover\ graph$ of
$P$
(the set of pairs $\{x,y\}$ for which $x\covby y$ or $y\covby x$), and
\item[(f)] $G(P)\in \mathcal{F}(n)$.
\end{enumerate}
Of course (e) is not enough to get us from
Theorem \ref{OBTFthm}
to the desired
bound (\ref{SATe2}) on $H(n)$ ($=|\mathcal{P}(n)|$),
since it may be that a given cover graph corresponds to many $P$'s.
It turns out
that a typical blue-bipartite $G$ does give rise to exactly
two $P$'s; but bounding the contributions of
general $G$'s is not so easy, and, {\em inter alia}, will
require a somewhat stronger version of Theorem \ref{OBTFthm}
(Theorem \ref{MT}).
If we only wanted Theorem \ref{OBTFthm}, then
Section \ref{SNBB} could be simplified, though the basic
argument would not change.

\section{Nearly blue-bipartite}\label{SNBB}

Fix
$C$ and $\eps>0$.  We won't bother giving these numerical values.
We choose $\eps $ so that the expression on the
right hand side of (\ref{c}) is less than 2,
let $c < 1-0.6\log_23$ be some positive constant satisfying (\ref{c}),
and choose (say)
\beq{C}
C> 12/c.
\enq
Set $s=s(n) = C\log n$.

Throughout the following discussion, $G$ is assumed to lie in $\f(n)$
and we use $V$ for $[n]$, the common vertex set of these $G$'s.
Set
$
\gk(G)=\min\{|K|:K\sub V, G-K ~\mbox{is blue-bipartite}\}.
$
Our main technical result is

\begin{thm}\label{MT}
There is a constant $c>0$ such that for sufficiently large n and
any $t \leq s$,
\beq{mainbd}
|\{G:\gk(G)\geq t\}| <  
2^{(1-c)sn}F(n-s)+
2^{(1-c)n\lceil t/3\rceil}F(n-\lceil t/3\rceil).
\enq
\end{thm}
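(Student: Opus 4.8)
\nin\textbf{Proof proposal.} The plan is to bound $|\{G:\gk(G)\ge t\}|$ by encoding each such $G$ as a short ``defect description'' together with a strictly smaller OBTF graph, and to show that odd-blue-triangle-freeness makes this encoding cost, per removed vertex, about $(1-c)n$ bits rather than the trivial $\approx n$; the two terms of (\ref{mainbd}) will come from the regimes $\gk(G)\ge s$ and $t\le\gk(G)<s$. Concretely, fix for $G$ a minimum set $K$ with $G-K$ blue-bipartite, together with a certifying partition $(U,W)$ of $V\sm K$. First record the constraints the OBTF condition imposes on a vertex $v\in K$ relative to $(U,W)$: since every triangle $vab$ with $ab\in E(G-K)$ is even-blue, the colours of $va$ and $vb$ agree whenever $a,b$ lie in a common part with $ab,va,vb$ all present (so $v$'s colouring is constant on each red component of $\Gamma(v)\cap U$, and likewise in $W$), while across any present $U$--$W$ edge $ab$ exactly one of $va,vb$ is blue. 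From this one extracts two facts: (i) if $v$ is ``compatible'' with $(U,W)$, i.e.\ if under these constraints $v$ could after all be placed into $U$ or into $W$, then $G-(K\sm\{v\})$ is blue-bipartite, contradicting minimality of $K$; hence every vertex of a minimal $K$ is incompatible with \emph{every} blue-bipartition of $V\sm K$. And (ii) incompatibility forces $G-K$ to be atypical near $v$: for a ``generic'' blue-bipartite $G-K$ the constraints above leave $v$ compatible, so incompatibility means, roughly, that the blue graph of $G-K$ is disconnected, or that $\Gamma(v)$ meets a part in more than one red component, or that $G-K$ is unusually sparse near $v$.

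The counting then proceeds by removing a well-chosen set $T\sub K$ and estimating $\sum_{H}N(H)$, where $H$ ranges over candidates for $G-T$ (so over $\f(n-|T|)$) and $N(H)$ is the number of ways of attaching $T$ so as to produce an OBTF graph with $\gk\ge t$. The saving comes from a balance: for ``dense'' $H$ the rigidity of (ii) forces $N(H)$ below $2^{(1-c)|T|n}$, while ``sparse'' $H$ --- for which $N(H)$ can be much larger --- are themselves rare enough (being far from blue-bipartite yet sparse, they number at most $2^{-\Omega(n^2)}$ of $F(n)$) that their total contribution is again within $2^{(1-c)|T|n}F(n-|T|)$. The size of $T$ is governed by a packing argument: from $K$ one reads off a family of local obstructions to blue-biparticity, each on at most three vertices; since $|K|$ is the minimum number of vertices meeting them all, a maximal vertex-disjoint subfamily has at least $\lceil|K|/3\rceil$ members, and one representative from each gives a stripping set $T$ with $|T|=\lceil t/3\rceil$, capped at $|T|=s$ in the regime $\gk(G)\ge s$. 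Putting this together, and absorbing the bookkeeping overheads --- a $\C{n}{|T|}$ factor for the choice of $T$, a $2^{O(n)}$ factor for partition/component data, and the $2^{\C{|T|}{2}}$ gap between $\C{n+1}{2}-|T|n$ and $\C{n-|T|+1}{2}$, all $2^{o(|T|n)}$ since $|T|\le s=C\log n$ --- into a slightly smaller constant $c$, one obtains $|\{G:\gk(G)=k\}|<2^{(1-c)\lceil k/3\rceil n}F(n-\lceil k/3\rceil)$ for $k<s$ and $<2^{(1-c)sn}F(n-s)$ for $k\ge s$. Since $F(m)/F(m-1)\approx2^m$ and $s\ll cn$, each of the two families of bounds is dominated, up to a constant factor, by its smallest-$k$ member, and adding the two regimes yields (\ref{mainbd}); the range of $k$ comparable to $n$, where the per-vertex saving is unavailable, is discarded by a separate crude estimate refining $F(n)=2^{(1+o(1))n^2/2}$ from \cite{BBL}.

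The hard part is step (ii) together with the dense-versus-sparse balance: incompatibility of a single vertex yields, a priori, only a \emph{constant}-size rigidity (a blue odd cycle is barely non-blue-bipartite yet imposes essentially no density constraint), so a \emph{linear} saving must be extracted from the interplay between the atypicality of $G-K$ and the edges --- in particular the blue edges --- \emph{inside} $K$, which are the genuine reason $\gk(G)$ can be large; one must also arrange that a single blue-bipartition, and a well-behaved red-component structure on it, serve for a positive fraction of $K$ at once, peeling off a further $o(s)$ vertices if small red components interfere. I expect this quantitative stability estimate to be where essentially all the work lies, the packing argument and the bookkeeping being comparatively routine.
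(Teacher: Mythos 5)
There is a genuine gap, and it is the one you yourself flag at the end: the entire quantitative content of the theorem --- that each removed vertex can be ``paid for'' with only $(1-c)n$ bits --- is left as an expectation rather than proved. Your frame (fix a minimum $K$ with $G-K$ blue-bipartite, argue each $v\in K$ is incompatible with every bipartition, and hope incompatibility forces an encodable atypicality) is not the paper's, and the paper's proof shows why the missing step is the whole battle: it never works with a minimal $K$ or with ``incompatibility'' at all. Instead it defines explicit anomaly classes $\X_0,\dots,\X_3$ (an $s$-set with neighborhood smaller than $0.6n$; many vertices of degree below $\eps n$; many pairs with codegree below $\eps n$; many adjacent pairs whose ``inconsistency set'' $I(x,Q_x,y,Q_y)$ exceeds $\eps n$), bounds each class by a strip-and-count encoding in which the OBTF condition yields an explicit per-pair factor such as $4^n(3/4)^{\ga_i-\gb_i}$ with $\ga_i-\gb_i\gtrsim 0.2n$ guaranteed by $G\notin\X_0$ (Lemmas \ref{LX3}, \ref{LX5}), and then proves the structural Lemma \ref{lbb}: outside these classes, deleting $K_1\cup K_2\cup K_3$ leaves a blue-bipartite graph (no odd-blue $4$- or $5$-cycles, then a diameter-$2$/blue-parity argument). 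The inequality $\gk\le\gk_1+2(\gk_2+\gk_3)$ is what produces the $\lceil t/3\rceil$ in (\ref{mainbd}). None of this machinery, nor any substitute for it, appears in your proposal.

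Two of the concrete steps you do give are unsound. First, the packing argument: in an OBTF graph the obstructions to blue-biparticity are odd-blue cycles of length at least $4$ (odd-blue triangles are excluded by definition), and such cycles can be arbitrarily long, so they do not ``localize to at most three vertices''; the claim that a maximal disjoint family of obstructions has at least $\lceil|K|/3\rceil$ members therefore has no justification, and the factor $3$ in the theorem comes from the entirely different source just described. Second, the dense/sparse dichotomy and the large-$\gk$ regime: $H=G-T$ need not be globally sparse when the trouble is local to the removed vertices, the asserted $2^{-\Omega(n^2)}$ rarity of the bad $H$'s is both unproved and entangled with ``far from blue-bipartite,'' which is essentially the statement being proved; and discarding $\gk$ comparable to $n$ by ``a crude estimate refining $F(n)=2^{(1+o(1))n^2/2}$'' would require a saving of order $n\log n$ below $\C{n+1}{2}$, far beyond what the $o(n^2)$ error of \cite{BBL} supplies. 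In the paper this regime needs no separate treatment: it is absorbed into the first term $2^{(1-c)sn}F(n-s)$ because the anomaly-class bounds are applied with the cap $t=s$.
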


\nin
Notice that, according to  (\ref{OBTFtoBB}), we expect
$F(n)\approx 2^{na}F(n-a)$ (for $a$ not too large);
so (\ref{mainbd}) says that non-BB graphs contribute little to this growth.
The easy derivation of Theorem \ref{OBTFthm}
from Theorem \ref{MT} is given near the end of this section.

Very roughly, the proof of Theorem \ref{MT} proceeds by
identifying several possible ways in which a graph might
be anomalously sparse
(see Lemmas \ref{Xdoi}-\ref{LX3} and \ref{LX5}),
and showing that graphs with many anomalies are rare,
while for those with few, $\gk$ is small.
Central to our argument will be our ability to say that for
most $G$ and most vertices $x$, there is a small (size about $\log n$)
subset of $\Gamma_x$ whose neighborhood is most of $G$.
The next lemma is a first step in this direction.

Let
$$\X_0(n,t)= \{G\in\mathcal{F}(n):\ \exists\; Q\subseteq V(G) \mbox{ with
$|Q|=t$ and $|\Gamma(Q)|< 0.6 n $}\}$$
and $\X_0=\X_0(n) = \X_0(n,s)$.

\begin{Lemma}
\label{Xdoi}

For sufficiently large n and $s\geq t> \go(1)$,
$$
|\X_0(n,t)| <
2^{(0.6 \log_23 +o(1))tn}  F(n-t)
$$
\end{Lemma}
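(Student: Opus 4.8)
The plan is to bound $|\X_0(n,t)|$ by first choosing the ``deficient'' set $Q$ and the small neighborhood $\Gamma(Q)$, and then counting OBTF graphs compatible with those choices. Fix $G\in\X_0(n,t)$ together with a witness $Q$ of size $t$ with $|\Gamma(Q)|<0.6n$. Write $N=\Gamma(Q)$, $|N|=m<0.6n$, and let $R=V\sm(Q\cup N)$, so $|R|=n-t-m\ge 0.4n-t$. The number of ways to specify $Q$ (a subset of $[n]$ of size $t$) and then $N$ (a subset of the remaining $n-t$ vertices) is at most $\binom{n}{t}2^{n-t}=2^{o(tn)}$ — negligible against the target bound, since $t=\go(1)$. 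So it suffices to show that, for each fixed pair $(Q,N)$, the number of OBTF graphs $G$ on $[n]$ with $\Gamma(Q)\subseteq N$ (equivalently, with no edge from $Q$ to $R$) is at most $2^{(0.6\log_2 3+o(1))tn}F(n-t)$.

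The key point is that once there are no $Q$--$R$ edges, the graph decomposes cleanly: the edges within $R\cup N$ must form an OBTF graph on those $n-t$ vertices (there are at most $F(n-t)$ of these), and what remains to be counted is (i) the colored edges inside $Q$, of which there are at most $3^{\binom t2}=2^{O(t^2)}=2^{o(tn)}$ possibilities, and (ii) the colored edges of $\nabla(Q,N)$. A crude bound on (ii) would give $3^{tm}$, which is too large; the gain comes from the OBTF constraint, which I would exploit as follows. For each vertex $v\in Q$, consider its colored neighborhood pattern on $N$, i.e. the function assigning to each $w\in N$ one of the three states (no edge / red / blue). The OBTF condition, applied to triangles with two vertices in $Q\cup N$ and using the already-fixed OBTF graph on $R\cup N$, severely restricts which patterns can coexist; in particular, for a typical fixed OBTF graph on $R\cup N$ one shows that the set of admissible patterns on $N$ has size $2^{(0.6\log_2 3+o(1))n}$ rather than $3^{n}$, so summing (independently, up to lower-order corrections) over the $t$ vertices of $Q$ yields the claimed factor $2^{(0.6\log_2 3+o(1))tn}$. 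The constant $0.6\log_2 3$ arises precisely because $|N|<0.6n$ and each vertex of $N$ contributes at most a ternary choice, i.e.\ $3^{|N|}\le 3^{0.6n}=2^{0.6n\log_2 3}$; the $o(1)$ absorbs the contributions from edges inside $Q$, from the choice of $(Q,N)$, and from the discrepancy between ``$\Gamma(Q)\subseteq N$'' and ``$\Gamma(Q)=N$.''

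Concretely, I would organize the count as: (1) sum over choices of $Q$ and $N$ — factor $2^{o(tn)}$; (2) sum over the OBTF graph induced on $V\sm Q$ — factor $\le F(n-t)$; (3) sum over colored edges inside $Q$ — factor $2^{O(t^2)}=2^{o(tn)}$ since $t\le s=C\log n$; (4) sum over colorings of $\nabla(Q,N)$ — factor $3^{t|N|}\le 3^{0.6tn}=2^{0.6(\log_2 3)tn}$. Multiplying gives $|\X_0(n,t)|\le 2^{(0.6\log_2 3+o(1))tn}F(n-t)$, as desired. I expect the main obstacle to be a bookkeeping subtlety rather than a deep one: the pair $(Q,N)$ is chosen depending on $G$, so the decomposition in step (2) must be stated carefully to avoid overcounting (one fixes $Q,N$ first, then counts \emph{all} $G$ consistent with them, accepting that a given $G$ may be counted for several witnesses — this only inflates the bound by the harmless factor from step (1)). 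The one genuine inequality being used is the trivial $3^{t|N|}$ bound on colorings of the bipartite part together with $|N|<0.6n$; no cleverness about OBTF structure is actually needed for \emph{this} lemma — that refinement is what the later lemmas (Lemmas \ref{LX3}, \ref{LX5}) are for. So in fact the cleanest writeup avoids even mentioning admissible patterns and just runs the four-factor count above.
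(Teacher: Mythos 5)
Your final four-factor count (choose $Q$ and $N$, then $G-Q$, then colored edges inside $Q$, then colored edges of $\nabla(Q,N)$ with the trivial $3^{t|N|}\le 3^{0.6tn}$ bound) is exactly the paper's proof, and it is correct; the absorptions $\binom{n}{t}2^{n-t}3^{\binom{t}{2}}=2^{o(tn)}$ are justified by $t=\go(1)$ and $t\le s=O(\log n)$. The only blemish is the middle paragraph's claim that the crude $3^{tm}$ bound is ``too large'' and needs an OBTF pattern argument — it isn't and doesn't, as you yourself conclude in the last paragraph, so that detour should simply be deleted.
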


\nin
{\em Remark }  
The statement is actually valid as long as $t<o(n)$,
but we will only use it with $t=s$.
In place of 0.6 we could use any constant
$\ga$ with $\ga \log_23<1$ and
$\ga > 1/2$, the latter being
crucial for Lemma \ref{LX3}.

\begin{proof}
All $G\in \mathcal{X}_0(n,t)$ can be constructed by choosing:
$Q$; $G-Q$; $\Gamma(Q)$; and the restriction of $G$
(including colors)
to edges meeting $Q$ (where we require $|Q|=t$ and $|\Gamma(Q)| < 0.6 n$).
We may bound the numbers of choices for these steps by (respectively):
$\C{n}{t}$; $F(n-t)$; $2^{n-t}$;
and $\exp_3[\C{t}{2} + t(0.6 n)]$.
The lemma follows.
\end{proof}

Set
$K_1(G) = \{x\in V(G): d_G(x) < \eps n\}$, $\gk_1(G)=|K_1(G)|$,
and
for $t\leq n$,
$$ \X_1(n,t) = \{G\in \f(n): \gk_1(G)\geq t\}.$$
Set $\X_1=\X_1(n) = \X_1(n,s)$.

\begin{Lemma}
\label{Xunu}
For sufficiently large n and any t,
$$
|\X_1(n,t)| < 2^{(H(\eps)+\eps +o(1))nt} F(n-t),
$$
\end{Lemma}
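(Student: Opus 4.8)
The plan is to mimic the counting argument used for Lemma \ref{Xdoi}: every $G\in\X_1(n,t)$ can be reconstructed from a modest amount of data, and we bound the number of choices for each piece. Specifically, to build such a $G$ we choose (i) the set $K_1 = K_1(G)$, or rather a $t$-subset $K\subseteq K_1(G)$ of it (we may as well take $|K|=t$, since $\gk_1(G)\ge t$); (ii) the graph $G-K\in\f(n-t)$; and (iii) for each $x\in K$, its neighborhood $\Gamma_x$ inside $V(G)$ together with the coloring of the edges at $x$. The point is that since $x\in K_1(G)$ we have $d_G(x)<\eps n$, so the neighborhood of $x$ is a subset of $V\sm\{x\}$ of size less than $\eps n$; the number of such subsets is $\C{n}{<\eps n} = 2^{(H(\eps)+o(1))n}$ by the standard binomial estimate (using $\eps<1/2$), and the number of ways to 2-color fewer than $\eps n$ chosen edges is at most $2^{\eps n}$.

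Multiplying the bounds for step (iii) over the $t$ vertices of $K$ gives $2^{(H(\eps)+\eps)nt}\cdot 2^{o(nt)}$ — note we should be slightly careful that edges within $K$ get counted only once, but since $|K|=t\le s=O(\log n)$ this costs only an extra factor $2^{\C{t}{2}}=2^{o(nt)}$, absorbed into the $o(nt)$ term. For step (i) the number of choices is $\C{n}{t}=2^{o(nt)}$ (again using $t=O(\log n)$; more generally $2^{O(t\log n)}$, which is $2^{o(nt)}$ as long as $t=o(n/\log n)$, certainly fine here). For step (ii) we get the factor $F(n-t)$. Assembling:
\[
|\X_1(n,t)| \;\le\; \C{n}{t}\,F(n-t)\,\Bigl(\C{n}{<\eps n}\,2^{\eps n}\Bigr)^{t}
\;=\; 2^{(H(\eps)+\eps+o(1))nt}\,F(n-t),
\]
which is the claimed bound.

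I do not expect a genuine obstacle here: the argument is the ``encode-and-count'' template already exhibited in the proof of Lemma \ref{Xdoi}, and the only thing to watch is the bookkeeping of the $o(nt)$ error terms — making sure that the contributions of $\C{n}{t}$, of the edges internal to $K$, and of the $2^{o(n)}$ slack in $\C{n}{<\eps n}$ are all genuinely $o(nt)$ for the relevant range of $t$. Since $t\le s=C\log n$ this is immediate. The one modeling choice worth flagging is whether to reconstruct all of $K_1(G)$ or just a $t$-subset; taking just a $t$-subset is cleaner because it makes the statement uniform in $t$ and avoids having to know $\gk_1(G)$ exactly, at the negligible cost of the $\C{n}{t}$ factor already present.
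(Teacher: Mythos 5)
Your proposal is correct and follows essentially the same encode-and-count argument as the paper: choose a $t$-subset $K$ of $K_1(G)$, the graph $G-K$, and then $\Gamma_x$ with edge colors for each $x\in K$, bounded by $\C{n}{t}$, $F(n-t)$ and $\bigl(\C{n}{<\eps n}2^{\eps n}\bigr)^t$ respectively. The bookkeeping worries you flag (edges inside $K$, the $\C{n}{t}$ factor) are harmless redundancies or $2^{O(t\log n)}=2^{o(nt)}$ terms exactly as the paper treats them.
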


\nin
\begin{proof}
All $G\in \X_1(n,t)$ can be constructed by choosing:
some $t$-subset $K$ of
$K_1(G)$; $G-K$; and $\Gamma_x$
and colors for $\nabla(x,\gG_x)$ for each $x\in K$.
(Of course redundancies here and in similar arguments later
only help us.)
The numbers of choices for these steps are bounded by:
$\C{n}{t}$; $F(n-t)$;
and $(\C{n}{< \eps n}2^{\eps n})^t $.
The lemma follows.
\end{proof}

For each $G\in \f(n)$, let
$K_2(G) = \{x_1\dots x_l,y_1\dots y_l\}$ be a largest possible
collection of (distinct) vertices of $V\sm K_1(G)$
such that
$|\Gamma(x_i)\cap\Gamma(y_i)| <\eps  n$
$\forall~i\in [l]$, and set $\gk_2(G)=l$.
Let
$$
 \X_2(n,t) = \{G\in \f(n)\sm (\X_0(n)\cup \X_1(n)):
 \gk_2(G) \geq t\}
$$
and $\X_2 = \X_2(n) = \X_2(n,s)$.
The next lemma is perhaps our central one.

\begin{Lemma}\label{LX3}
For sufficiently large n and $ t<o(n)$,
$$|\X_2(n,t)| <
2^{2H(\eps ) nt}(3/4)^{0.2nt}4^{nt} F(n-2t).
$$

\end{Lemma}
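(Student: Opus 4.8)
The plan is to mimic the counting strategy of Lemmas~\ref{Xdoi} and~\ref{Xunu}: encode every $G\in\X_2(n,t)$ by first ``deleting'' a carefully chosen $2t$-subset, then bounding the number of ways to reattach it. Fix $G\in\X_2(n,t)$ and let $K_2(G)=\{x_1\dots x_l,y_1\dots y_l\}$ with $l\ge t$; keep only the first $t$ pairs, setting $A=\{x_1\dots x_t\}$, $B=\{y_1\dots y_t\}$, $K=A\cup B$, so $|K|=2t$ (the $x_i$'s and $y_i$'s are distinct since $K_2$ is a set of distinct vertices). To reconstruct $G$ we choose: the paired set $K$ (at most $\C{n}{2t}\cdot(2t)! = n^{O(t)}$ ways — this is the $\exp[o(nt)]$ slack we can afford); then $G-K\in\f(n-2t)$, giving the factor $F(n-2t)$; and finally the edges (with colors) meeting $K$. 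The whole point is that the defining property of $K_2$ — namely $|\Gamma(x_i)\cap\Gamma(y_i)|<\eps n$ for each $i$ — plus the fact that $G\notin\X_1$ (so every vertex outside $K_1$ has degree $\ge\eps n$, in particular $x_i,y_i$ do) forces the neighborhoods of $x_i$ and $y_i$ to be nearly disjoint, and it is this near-disjointness that beats the naive $4^{?}$ count.

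The key estimate is the number of ways to choose, for each $i\in[t]$, the pair of neighborhoods $(\Gamma(x_i),\Gamma(y_i))$ restricted to $V\sm K$ (a ground set of size $n-2t = (1-o(1))n$), subject to $|\Gamma(x_i)\cap\Gamma(y_i)|<\eps n$. For a single index $i$ this is at most
$$
\sum_{j<\eps n}\C{n}{j}\cdot 4^{\,n-2j}
$$
roughly: pick the intersection ($\C{n}{j}$ ways, $j<\eps n$), then independently decide for each of the remaining $\approx n$ vertices whether it lies in $\Gamma(x_i)$ only, $\Gamma(y_i)$ only, or neither — $3$ choices, not $4$ — while the $j$ common vertices contribute a further bounded factor. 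Summing the geometric-type tail, this is at most $2^{(2H(\eps)+o(1))n}\cdot 3^{\,n}$ per pair, i.e.\ $2^{(2H(\eps)+o(1))n}(3/4)^{n}4^{n}$, and raising to the $t$-th power gives exactly the bound in the statement, $2^{2H(\eps)nt}(3/4)^{0.2nt}4^{nt}F(n-2t)$ — note the $(3/4)^{0.2nt}$ rather than $(3/4)^{nt}$ is deliberate slack, presumably needed because on the $j<\eps n$ common vertices and on edges internal to $K$ one really does have up to $4$ (or $3$) colored-edge choices, eroding the saving from $3/4$ per vertex down to $(3/4)^{0.2}$ per vertex after the dust settles. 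One also multiplies in the colorings of the $\C{2t}{2}=o(nt)$ edges inside $K$, absorbed into the $o(nt)$ exponent.

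The main obstacle — and the step to be careful about — is the passage from ``$4$ choices per vertex'' to ``$3$ choices per vertex.'' A priori, specifying whether the (colored) edges $xv$ and $yv$ are present ranges over $\{$absent, red, blue$\}^2$ for each of $x,y$, which looks like $9$ options, not $4$; the reduction to $4$ comes from the observation (as in Lemma~\ref{Xunu}) that it suffices to choose the \emph{neighborhoods} $\Gamma(x_i),\Gamma(y_i)$ as vertex sets together with the colors of those edges, but the clean ``$3^n$'' count requires tracking this bookkeeping honestly — in particular separating the contribution of $v\in\Gamma(x_i)\cap\Gamma(y_i)$ (where $4$ color-pattern choices remain) from $v$ in the symmetric difference or in neither (fewer choices), and verifying that the number of common vertices being $<\eps n$ makes the ``$4$'' part negligible on the exponential scale, leaving the bulk of the ground set contributing the factor $3$ per vertex. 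Getting the exponent of $3/4$ down to the stated $0.2$ is then just choosing $\eps$ small enough, exactly as promised in the paper's preamble to this section; once the single-index bound is in hand, independence across $i\in[t]$ and the earlier $\exp[o(nt)]$ accounting finish the proof.
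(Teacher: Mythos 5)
There is a genuine gap, and it sits exactly at the step you flagged as ``the step to be careful about.'' Your key estimate counts, for each pair $(x_i,y_i)$, the choices of the two neighborhoods in $V\sm K$ as \emph{sets}, getting $3$ options per vertex outside the small intersection; but the lemma is about colored graphs, and once colors are included the honest per-vertex count for a vertex $z\notin\Gamma(x_i)\cap\Gamma(y_i)$ is $1+2+2=5$ (absent from both; in $\Gamma(x_i)$ only, with $2$ colors; in $\Gamma(y_i)$ only, with $2$ colors), not $3$. So your single-pair bound is really of order $5^{(1-o(1))n}$, and since $5^n$ exceeds $4^n(3/4)^{0.2n}$ by an exponential factor, the stated bound does not follow; the near-disjointness of $\Gamma(x_i)$ and $\Gamma(y_i)$ alone (the defining property of $K_2$) buys nothing below $4^n$ per pair once colors are paid for. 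A symptom of the problem is that your argument never uses the hypothesis $G\notin\X_0$ (nor, in any load-bearing way, the OBTF property), whereas $\X_2$ is defined inside $\f(n)\sm(\X_0\cup\X_1)$ precisely because the exclusion of $\X_0$ is what makes the lemma true; the remark after Lemma \ref{Xdoi} (``$\ga>1/2$ \ldots crucial for Lemma \ref{LX3}'') points at this.

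The paper's mechanism, which your proposal is missing, is the following. After choosing $K$, $T_i=\Gamma(x_i)\cap\Gamma(y_i)$ (cost $2^{H(\eps)n}$ per $i$), and $G'=G-K$, one also chooses for each $v\in K$ a small anchor set $Q_v\sub\Gamma(v)\sm K$ of size $s=C\log n$ together with the colors on $\nabla(v,Q_v)$; this costs only $n^{O(s)}=2^{o(n)}$ per vertex. The point is that OBTF then \emph{forces} the color of any potential edge from $x_i$ to a vertex $z\in\Gamma_{G'}(Q_{x_i})$ (a two-edge path $x_i w z$ with known colors leaves one admissible color), and likewise for $y_i$. Hence a vertex $z$ lying in both $\Gamma_{G'}(Q_{x_i})$ and $\Gamma_{G'}(Q_{y_i})$ but outside $T_i$ admits only $3$ possibilities for $\nabla(z,\{x_i,y_i\})$, one in neither admits $5$, and the intermediate cases admit $4$. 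Finally, $G\notin\X_0$ guarantees $|\Gamma_{G'}(Q_{x_i})|,|\Gamma_{G'}(Q_{y_i})|\geq 0.6n-o(n)$, so the ``$3$''-region exceeds the ``$5$''-region by at least $(0.2-\eps)n-o(n)$, and $5^{\gb_i}4^{\ast}3^{\ga_i}\leq 4^n(3/4)^{\ga_i-\gb_i}$ yields the factor $(3/4)^{0.2nt}$ (up to the $\eps$ and $o(1)$ slack) in the statement. Without the anchor sets, the OBTF forcing, and the expansion supplied by $G\notin\X_0$, there is no way to beat $4^{nt}$, so the proposal as written cannot be repaired by bookkeeping alone.
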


\nin
(Actually we only use this with $t< O(\log n)$.)

\begin{proof}
All $G\in \mathcal{X}_2(n,t)$ can be constructed by choosing:

\mn
(i)  $K=\{x_1\dots x_t,y_1\dots y_t\}\sub V\sm K_1(G)$
(with $x_1\dots y_t$ distinct);

\mn
(ii) for each $i\in [t]$, $T_i := \Gamma(x_i)\cap\Gamma(y_i)$ of size
at most $\eps n$;

\mn
(iii) $G':= G[V']$ (including colors), where $V'=V\sm K$;

\mn
(iv) for each $v\in K$, some $Q_v\sub \Gamma(v)\sm K$ of size $s$
and colors for $\nabla(v,Q_v)$;

\mn
(v)
the remaining edges of $G$ meeting $K$
(those not in $\cup_{v\in K}\nabla(v,Q_v)$) and colors for these edges.

\mn
(The point of (iv) is that,
since $G$ is OBTF,
the colors for $\nabla(v,Q_v)$ together with
those for edges of $G'$ meeting $Q_v$ limit our choices for the remaining
edges at $v$.)

We may bound the numbers of choices in steps (i)-(iv) by $n^{2t}$,
$2^{H(\eps  )nt}$,
$F(n-2t)$, and $\left(\C{n}{s}2^s\right)^{2t}< n^{2st}$
respectively, and the number of choices for $G[K]$ (in (v))
by $\exp_3[{\C{2t}{2}}]$.

Given these choices (and aiming to bound the number of possibilities for $\nabla(K,[n]\sm K)$), we write
$\Gamma'$ for $\Gamma_{G'}$, and,
for $i\in [t]$, define:
$P_i=Q_{x_i}$, $Q_i=Q_{y_i}$;
$R_i=(\Gamma'(P_i)\cap \Gamma'(Q_i))\sm T_i$,
$R_i'=\Gamma'(P_i)\sm \Gamma'(Q_i)\sm T_i$,
$R''_i=\Gamma'(Q_i)\sm \Gamma'(P_i)\sm T_i$ and
$\bar R_i=V'\sm (R_i\cup R'_i\cup R''_i\cup T_i)$; and
$\ga_i = |R_i|$, $\ga'_i = |R'_i|$,
$\ga''_i = |R''_i|$, $\gb_i = |\bar R_i|$ and
$\gd_i =|T_i|$ ($< \eps  n)$.

We then consider (the interesting part of the argument)
the number of possibilities for
$\nabla(z,\{x_i,y_i\})$ (including colors) for
$z\in V'$.  With explanations to follow,
this number is at most:
(i) 5 if $z\in \bar R_i$;
(ii) 4 if $z\in T_i \cup R'_i\cup R''_i$; and
(iii) 3 if $z\in R_i$.  This is because:

\mn
(i) $z\not\in T_i$ excludes the four possibilities with
$z$ connected to both $x_i$ and $y_i$;

\mn
(ii) for $z\in T_i$ this is obvious; for $z\in R'_i$,
we already know the colors on some $(x_i,z)$-path of length two, so
the condition OBTF leaves only one possible color for an edge between $x_i$ and $z$,
thus excluding one of the five possibilities in (i) (and similarly for
$z\in R_i''$);

\mn
(iii) here we have (as in (ii)) one excluded color for each of $x_iz$, $y_iz$.

\mn
Thus, letting $z$ vary and noting that
$\ga_i  +\ga_i'+\ga_i''+\gb_i+\gd_i =n-2t$,
we find that the number of possibilities
for $\nabla(\{x_i,y_i\}, V')$ is at most

$$
5^{\gb_i} 4^{\ga_i'+\ga_i'' +\gd_i} 3^{\ga_i}<
4^n\left(\frac{15}{16}\right)^{\gb_i}
\left(\frac{3}{4}\right)^{\ga_i-\gb_i}
\leq 4^n
\left(\frac{3}{4}\right)^{\ga_i-\gb_i}.$$
The crucial point in all this is that $G\not\in \X_0$ guarantees
that $\ga_i-\gb_i$ is big:
each of $\ga_i +\ga_i'$, $\ga_i +\ga_i''$ is at least $0.6 n -2t -\gd_i $,
whence
$$
n-2t-\gd_i -\gb_i = \ga_i +\ga'_i+\ga''_i > 1.2n -4t -2\gd_i -\ga_i ,
$$
implying
$\ga_i -\gb_i > 0.2 n -2t -\gd_i  > (0.2-\eps ) n -2t $.
So, finally, applying this to each $i$ and combining with our
earlier bounds (for (i)-(iv) and the first part of (v)) bounds the total
number of possibilities for $G$ by
$$
2^{(H(\eps ) +o(1))nt} (3/4)^{(0.2-\eps -o(1))nt}4^{nt}F(n-2t),
$$
which is less than the bound in the lemma.

\end{proof}

\begin{Lemma}
\label{almostall}
For any
$G\in \mathcal{F}(n)\setminus\mathcal{X}_0(n)$,
$x\in V\sm (K_1(G)\cup K_2(G))$ and
$S\sub V$ of size $6s$,
there exists
 $Q_x\subset\Gamma_x\sm S$ with
\beq{VK1K2}
|Q_x| =\log n ~~\mbox{and} ~~~|V\sm  (K_1(G)\cup K_2(G)\cup \Gamma(Q_x))| < 2n^{1-\eps }.
\enq
\end{Lemma}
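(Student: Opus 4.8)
The plan is to build $Q_x$ by a first–moment (equivalently, greedy) covering argument inside $\Gamma_x\sm S$, the crucial input being that, by the maximality built into the definition of $K_2(G)$, every vertex that still needs covering shares many neighbours with $x$. Write $W=V\sm(K_1(G)\cup K_2(G))$, so $x\in W$ and $|W|\le n$. The first step is the observation that $|\Gamma(x)\cap\Gamma(y)|\ge\eps n$ for every $y\in W$: indeed $x$ and $y$ both lie in $V\sm K_1(G)$ and neither lies in $K_2(G)$, so if this common neighbourhood had fewer than $\eps n$ vertices then $K_2(G)\cup\{x,y\}$ (with $x$ paired to $y$) would be an admissible collection with one more pair, contradicting the maximality defining $K_2(G)$. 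Since $x\notin K_1(G)$ we have $|\Gamma_x|\ge\eps n$, and as $|S|=6s=o(n)$ it follows that $|\Gamma(y)\cap(\Gamma_x\sm S)|\ge\eps n-6s$ for every $y\in W$; in particular $|\Gamma_x\sm S|\ge\eps n-6s\ge\log n$ for $n$ large, so sets $Q_x$ of the prescribed size exist.

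Next, let $Q_x$ be a uniformly random $(\log n)$–element subset of $\Gamma_x\sm S$. For a fixed $y\in W$, the standard bound for sampling without replacement gives, using $|\Gamma(y)\cap(\Gamma_x\sm S)|\ge\eps n-6s$ and $|\Gamma_x\sm S|\le n$,
\[
\Pr\big[\,y\ \text{has no neighbour in}\ Q_x\,\big]\;\le\;\Big(1-\tfrac{\eps n-6s}{n}\Big)^{\log n}\;=\;n^{\log_2(1-\eps+o(1))}\;<\;n^{-\eps}
\]
for $n$ large, the last inequality holding because $\log_2(1-\eps)<-\eps$ (as $\ln(1-\eps)<-\eps$ and $\ln 2<1$). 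Hence the expected number of $y\in W$ with no neighbour in $Q_x$ is less than $|W|\,n^{-\eps}\le n^{1-\eps}$. Since $\Gamma(Q_x)$ contains every $y$ that has a neighbour in $Q_x$ except for the at most $\log n$ vertices lying in $Q_x$ itself, the expected size of $V\sm(K_1(G)\cup K_2(G)\cup\Gamma(Q_x))=W\sm\Gamma(Q_x)$ is below $n^{1-\eps}+\log n<2n^{1-\eps}$, so some choice of $Q_x$ satisfies (\ref{VK1K2}). (A deterministic version is identical in spirit: repeatedly add to $Q_x$ the vertex of $\Gamma_x\sm S$ covering the largest fraction of the still–uncovered part of $W$, which by averaging is at least an $(\eps-o(1))$–fraction.)

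The only delicate point is the arithmetic in the displayed line: one must sample inside $\Gamma_x\sm S$ (a set of size at most $n$), not inside all of $V$, so that the per–element miss probability is essentially $1-\eps$ rather than something closer to $1$, and then the elementary inequality $\log_2(1-\eps)<-\eps$ is precisely what lets $\log n$ rounds buy the factor $n^{-\eps}$ that brings us below the target $2n^{1-\eps}$; the ``$2$'' is there only to absorb the $\log n$ vertices of $Q_x$ itself. The hypothesis $G\notin\X_0(n)$ plays no essential role in this particular lemma beyond situating us where it is later applied.
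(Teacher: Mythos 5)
Your argument is correct and is essentially the paper's own proof: maximality of $K_2(G)$ gives $|\Gamma_x\cap\Gamma_y|\ge\eps n$ for all $x,y\notin K_1(G)\cup K_2(G)$, and then a uniformly random $(\log n)$-subset of $\Gamma_x\sm S$ misses a fixed such $y$ with probability $n^{-\eps+o(1)}$, so a first-moment bound produces the required $Q_x$. The only differences are cosmetic (you bound the miss probability as $(1-\eps+o(1))^{\log n}$ rather than $(1-\log n/n)^{\eps n-6s}$, and you spell out the small points—existence of a $(\log n)$-subset and the convention that $\Gamma(Q_x)$ excludes $Q_x$—that the paper leaves implicit).
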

\begin{proof}
We have
$|(\Gamma_x\cap \Gamma_y)\sm S| \geq \eps n-6s$
for any $x,y\in V\sm (K_1(G)\cup K_2(G))$.
So, for any such $x,y$ and $Q$ a random (uniform) ($\log n$)-subset of $\Gamma_x\sm S$,
$$
\Pr(Q\cap \Gamma_y =\emptyset)
< \left(1-\frac{\log n}{n}\right)^{|\Gamma_x\cap \Gamma_y|-6s}
< \left(1-\frac{\log n}{n}\right)^{\eps  n-6s} < 2n^{-\eps }.
$$
Thus $\E |V\sm (K_1(G)\cup K_2(G)\cup \Gamma(Q))| < 2n^{1-\eps }$
and the lemma follows.
\end{proof}

For $x\in V$ and $Q_x\sub \Gamma_x$,
say $z\in V$ is {\it inconsistent for} $(x,Q_x)$ if
there is an odd-blue cycle $xx_1zx_2$
with $x_1,x_2\in Q_x$,
and write $I(x,Q_x)$ for the set of such $z$.
If in addition $y\sim x$ and $Q_y\sub \Gamma_y$,
say $z\in V$ is {\it inconsistent for}
$(x,Q_x,y,Q_y)$ if
$z\in I(x,Q_x)\cup I(y,Q_y)$ or
there is an odd-blue cycle $xx_1zy_1y$ with
$x_1\in Q_x$ and $y_1\in Q_y$,
and write $I(x,Q_x,y,Q_y)$ for the set of such $z$.

For $G\in \f(n)$, let
$K_3(G) = \{x_1\dots x_l,y_1\dots y_l\}$ be a largest possible
collection of (distinct) vertices of $V\sm (K_1(G)\cup K_2(G))$
with $x_i\sim y_i$ and
for which there exist
$Q_v\sub \Gamma_v\sm K_3$
for $v\in K_3$
satisfying (\ref{VK1K2}) and
\beq{IxQx}
|I(x_i,Q_{x_i},y_i,Q_{y_i})| >\eps  n ~\forall~i\in [l].
\enq
Set $\gk_3(G)=l$.
Let
$$
 \X_3(n,t) = \{G\in \f(n)\sm (\X_0\cup \X_1\cup \X_2):
 \gk_3(G) \geq t\}
$$
and $\X_3 = \X_3(n) = \X_3(n,s)$.

Now for $G\in \f(n)\sm \X_0$ and each
$x\in V\sm (K_1(G)\cup K_2(G))$,
fix some $Q_x\sub \Gamma_x\sm K_3(G)$ satisfying
(\ref{VK1K2}) and (\ref{IxQx}) if $x\in K_3(G)$ and (\ref{VK1K2}) otherwise.  Existence of such $Q_x$'s is given by
Lemma \ref{almostall}, and the maximality of $K_3(G)$
implies that for each $xy\in E(G- (K_1(G)\cup K_2(G)\cup K_3(G)))$
we have $I(x,Q_x,y,Q_y) \leq \eps n$.
Having fixed these $Q_x$'s,
we abbreviate $I(x,Q_x) = I(x)$ and
$I(x,Q_x,y,Q_y) = I(x,y)$.

\begin{Lemma}\label{LX5}
For sufficiently large n and $t \leq s$,
$$|\X_3(n,t)| <  (3/4)^{\eps  nt}2^{o(nt)}4^{nt} F(n-2t).
$$
\end{Lemma}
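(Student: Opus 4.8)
The plan is to mimic the proof of Lemma \ref{LX3}, but now exploiting the inconsistency sets $I(x_i,y_i)$ in place of the common-neighborhood deficiency used there. Fix $G \in \X_3(n,t)$ and a witnessing collection $K = \{x_1\dots x_t, y_1\dots y_t\}$ with the associated $Q_v$'s satisfying (\ref{VK1K2}) and (\ref{IxQx}). I would construct every such $G$ by choosing, in order: (i) the set $K$ (at most $n^{2t}$ ways); (ii) for each $v \in K$, the set $Q_v \subseteq \Gamma_v \setminus K$ of size $\log n$, together with colors on $\nabla(v, Q_v)$ (at most $(\binom{n}{\log n} 2^{\log n})^{2t} < n^{(\log n)\cdot 2t} = 2^{o(nt)}$ ways); (iii) the graph $G' := G[V']$ with colors, where $V' = V \setminus K$ (at most $F(n - 2t)$ ways); (iv) for each $i \in [t]$, the set $I_i := I(x_i, Q_{x_i}, y_i, Q_{y_i}) \cap V'$, of size $\le \eps n + O(1)$ (at most $2^{(H(\eps) + o(1))nt}$ ways, using $\binom{n}{\le \eps n} \le 2^{(H(\eps)+o(1))n}$ and crudely bounding the relatively tiny contributions of $I(x_i) \cup I(y_i)$ coming from $K_1, K_2$-type vertices); and finally (v) the remaining edges of $G$ meeting $K$, with colors.

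For step (v) the key observation — exactly as in Lemma \ref{LX3} — is that once $G'$, the $Q_v$'s, and the colors on $\nabla(v, Q_v)$ are fixed, the requirement that $G$ be OBTF severely restricts the colored edges from a vertex $z \in V'$ to the pair $\{x_i, y_i\}$. For $z \notin I_i$, by definition there is no odd-blue cycle $x_i x' z x''$ with $x', x'' \in Q_{x_i}$ (so once we know whether $z \sim x_i$, the color, if any, is forced by some length-two path through $Q_{x_i}$ whenever $z \in \Gamma'(Q_{x_i})$), no odd-blue cycle $y_i y' z y''$, and no odd-blue $4$-cycle $x_i x' z y' y_i$ with $x' \in Q_{x_i}$, $y' \in Q_{y_i}$. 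Since $G \notin \X_0$, almost all $z \in V'$ lie in $\Gamma'(Q_{x_i}) \cap \Gamma'(Q_{y_i})$ — here I invoke Lemma \ref{almostall} / the defining property (\ref{VK1K2}) of the $Q_v$'s, which guarantees $|V \setminus (K_1 \cup K_2 \cup \Gamma(Q_{x_i}))| < 2n^{1-\eps}$ and likewise for $Q_{y_i}$. For such a $z$ (and $z \notin I_i \cup K_1 \cup K_2$) I claim the number of colored configurations on $\nabla(z, \{x_i, y_i\})$ is at most $3$: the four configurations in which $z$ is joined to both $x_i$ and $y_i$ are governed by the $4$-cycle condition together with the two length-two conditions, and a short case check — if $z \sim x_i$ the $x_i z$ color is forced, if $z \sim y_i$ the $y_i z$ color is forced, and then the $4$-cycle through $Q_{x_i}, Q_{y_i}$ rules out one of the two surviving ``$z$ joined to both'' colored edges — leaves at most $3$ possibilities (the one ``$z$ joined to neither,'' and two of the form ``joined to exactly one, with forced color''). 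The $z \in I_i \cup K_1 \cup K_2$ contribute a factor at most $5$ each, but there are at most $\eps n + O(1)$ such $z$ per index $i$, so their total contribution is absorbed into the $2^{o(nt)}$ (the $K_1, K_2$ part) times a $(5/4)^{\eps n}$-type factor that I fold into the $2^{o(nt)}$ after adjusting constants, or more honestly keep as part of comparing $5^{\eps n} 3^{(1-\eps)n} \le 4^n (3/4)^{(1-\eps)n}(5/4)^{\eps n}$ and noting $(5/4)^{\eps n} < 2^{o(n)}$ is false — so instead I write the per-$i$ bound as $5^{\gd_i} 3^{n - 2t - \gd_i - o(n)} 5^{o(n)} \le 4^n (3/4)^{n}(15/4)^{\gd_i} \cdot 2^{o(n)}$ with $\gd_i \le \eps n$, i.e. $4^n (3/4)^{(1-\eps)n} 2^{o(n)}$, and since $\eps$ is tiny this is $\le 4^n (3/4)^{\eps n} 2^{o(n)}$ after slightly enlarging $\eps$ in the exponent of $3/4$ only — cleaner is simply to match the lemma's stated $(3/4)^{\eps n t}$, which is a much weaker bound than $(3/4)^{(1-\eps)nt}$ and hence certainly holds). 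Multiplying over $i \in [t]$ gives the per-$(v)$ bound $(3/4)^{\eps n t} 2^{o(nt)} 4^{nt}$, and combining with steps (i)–(iv) yields the claimed $|\X_3(n,t)| < (3/4)^{\eps n t} 2^{o(nt)} 4^{nt} F(n - 2t)$.

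**The main obstacle** I anticipate is the bookkeeping in step (v): one must verify carefully that the length-two paths through $Q_{x_i}$ and $Q_{y_i}$ and the four-cycle condition really do cut the $z$-configurations from $5$ down to $3$ for almost all $z$, and in particular that the ``forced color'' from a length-two path is genuinely well-defined (there may be several length-two paths, and OBTF only says they must all be consistent — which is automatic since $G$ is OBTF, but the construction needs $G'$ fixed first so that these forced colors are already determined before we choose the edges at $K$). A secondary nuisance is handling the vertices in $I_i$ that arise from $I(x_i) \cup I(y_i)$ rather than from a genuine $4$-cycle, and those in $K_1 \cup K_2 \cup K_3$ itself; but since $|K_1|, |K_2| < s = o(n)$ on the complement of $\X_1 \cup \X_2$ (by definition of $\X_3$ we're outside those), and $|K_3| = 2t \le 2s = o(n)$, all of these exceptional vertices number $o(n)$ per index and are swallowed by $2^{o(nt)}$. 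Everything else is a routine repetition of the entropy-counting bookkeeping already carried out in Lemmas \ref{Xdoi}–\ref{LX3}.
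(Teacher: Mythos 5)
Your proposal runs into two genuine problems, both centered on the sets $I_i = I(x_i,Q_{x_i},y_i,Q_{y_i})$. First, you have the size of $I_i$ backwards: by the definition of $K_3$ and $\X_3(n,t)$ (see (\ref{IxQx})), each inconsistency set satisfies $|I(x_i,Q_{x_i},y_i,Q_{y_i})| > \eps n$ --- it is guaranteed to be \emph{large}, not of size at most $\eps n + O(1)$ as you assert in your step (iv). This matters because the entire gain in the lemma comes from the inconsistent vertices: for $z$ in $\Gamma'(Q_{x_i}) \cap \Gamma'(Q_{y_i})$ but \emph{not} in $I_i$, the OBTF condition forces the color of a possible edge $zx_i$ (via a known length-two path through $Q_{x_i}$) and of a possible edge $zy_i$, but nothing excludes $z$ being joined to \emph{both} $x_i$ and $y_i$ with these forced colors; hence such $z$ admit $4$ configurations, not $3$ as you claim. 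It is precisely for $z \in I_i$ that a configuration is killed: if $z\in I(x_i)$ then neither color is available for $zx_i$ (and similarly for $I(y_i)$), and otherwise the odd-blue $5$-cycle $x_ix'zy'y_i$ (it is a $5$-cycle through the edge $x_iy_i$, not a $4$-cycle) makes ``joined to both'' impossible --- giving the factor $3$. With the roles corrected, your computation ``$3$ for most $z$, $5$ for the at most $\eps n$ exceptional $z$'' collapses: the correct counts are $4$ for the consistent vertices and $3$ for the at least $\eps n$ inconsistent ones, and it is exactly this that produces the stated $(3/4)^{\eps nt}$.

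Second, your step (iv), which pays $2^{(H(\eps)+o(1))nt}$ to guess the sets $I_i$, is both unnecessary and fatal to the bound as written: the sets $I_i$ are determined by the choices of $K$, $G'=G[V']$, the $Q_v$'s with the colors on $\nabla(v,Q_v)$, and $G[K]$ (whose $3^{\C{2t}{2}}$ possibilities are negligible and can be fixed before treating $\nabla(K,V')$), so no further choice is needed; and in your final combination you silently drop the factor $2^{(H(\eps)+o(1))nt}$, which is not $2^{o(nt)}$ for fixed $\eps$ and would overwhelm the $(3/4)^{\eps nt}$ gain if kept. The paper's proof is exactly your steps (i)--(iii) and (v) (plus a $9^{o(n)}$ allowance, via (\ref{VK1K2}), for the $o(n)$ vertices outside $\Gamma'(Q_{x_i})\cap\Gamma'(Q_{y_i})$), with the per-vertex counts $4$ and $3$ distributed as above; once you delete step (iv) and reverse your $3$-versus-$4$ bookkeeping, you recover it.
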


\begin{proof}
All $G\in \mathcal{X}_3(n,t)$ can be constructed by choosing:

\mn
(i) $K=\{x_1\dots x_t,y_1\dots y_t\}$
(with $x_1\dots y_t$ distinct);

\mn
(ii) $ G[V']$ (including colors), where $V'=V\sm K$;

\mn
(iii) for each $x\in K$, $Q_x$
and colors for $\nabla(x,Q_x)$;

\mn
(iv)
the remaining edges meeting $K$ and colors for these edges.

\mn
We may bound the numbers of choices in steps (i)-(iii) by $n^{2t}$,
$F(n-2t)$, and $n^{2t\log n}$
respectively, and the number of choices for $G[K]$
(in (iv))
by $3^{\C{2t}{2}}$.
Notice that the choices in (i)-(iii) determine the sets $I(x_i,y_i)$, which
in particular are of size at least $\eps n$.

As in Lemma \ref{LX3},
the interesting point is
the number of possibilities for $\nabla(z,\{x_i,y_i\})$
for $z\in V'$.
In general, if $z\in \Gamma(Q_{x_i})\cap\Gamma(Q_{y_i})$
this number is at most 4,
since (because $G$ is to be OBTF)
any path $x_ixz$ with $x\in \Gamma(Q_{x_i})$---so we
already know the colors of $x_ix$ and $xz$---excludes
one possible color for a (possible) edge $x_iz$,
and similarly for $y_i$.
Moreover, if $z\in I(x_i,y_i)$ then the number is at most 3:
if $z\in I(x_i)$ then an edge $x_iz$ of {\em either} color
gives an odd-blue triangle,
and similarly if $z\in I(y_i)$; and
otherwise, we cannot have $z$ joined to both $x_i$ and $y_i$
without creating an odd-blue triangle
(and we already know an edge $x_iz$ or $y_iz$
admits at most one possible color).
If $z\not\in \Gamma(Q_{x_i})\cap\Gamma(Q_{y_i})$, then
we just bound the number by 9,
noting that the number of such $z$ is
$o(n)$ (since the $Q$'s satisfy (\ref{VK1K2})).

Thus
the number of possibilities
for $\nabla(\{x_i,y_i\}, V')$ is at most

$$
4^{n-\eps  n}3^{\eps  n}9^{o(n)} =
4^n
(3/4)^{\eps  n}2^{o(n)};$$
so combining with our earlier bounds we find that the number
of possibilities for $G$ is less than
$
(3/4)^{\eps  nt}2^{o(nt)}4^{nt}
F(n-2t).$
\end{proof}

For $G\not\in \X_0$
let $K(G) = K_1(G)\cup K_2(G)\cup K_3(G)$.
As we will see,
Theorem \ref{MT} is now an easy consequence of
\begin{Lemma}\label{lbb}
For each $G\in \f(n)\sm (\X_0\cup\cdots \cup\X_3)$, $G-K(G)$ is blue-bipartite.
\end{Lemma}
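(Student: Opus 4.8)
The plan is to show that after deleting $K(G)$, no odd-blue triangle-free obstruction to blue-biparticity can survive, by exploiting the ``consistency'' information encoded in the fixed sets $Q_x$. Write $V^* = V \sm K(G)$ and $G^* = G - K(G)$. The key observation is that for $x \in V^*$, the small set $Q_x \sub \Gamma_x$ satisfies (\ref{VK1K2}), so $\Gamma(Q_x)$ covers all but at most $2n^{1-\eps}$ vertices; and for each edge $xy$ of $G^*$ we have $|I(x,y)| \leq \eps n$ (by the maximality of $K_3(G)$, as recorded just before Lemma \ref{LX5}). The strategy is to use these $Q_x$'s as ``local coordinate systems'': for $z$ adjacent to $x$ in $G^*$, the colors along a two-path $x x_1 z$ with $x_1 \in Q_x$ (forced to exist for all but $o(n)$ many $z$) together with the OBTF condition assign a well-defined ``sign'' $\sigma_x(z) \in \{B, R\}$ to the edge $xz$ — namely, the unique color not creating an odd-blue triangle with that two-path. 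One first checks this is well-defined (independent of the choice of $x_1 \in Q_x$), which is exactly the statement that $z \notin I(x)$; and since $I(x) \sub I(x,y)$ is small, this holds for almost all $z$.

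Next I would build the bipartition. Fix a reference vertex $x_0 \in V^*$ of large degree and declare $U$ to consist of $x_0$ together with all $z$ for which the edge $x_0 z$ is (or would be forced to be) red, and $W$ the rest; more robustly, since one reference vertex may fail for a few $z$'s, define the partition via a majority/propagation argument over a dense well-behaved ``core'' of $V^*$. The claim to verify is: every blue edge of $G^*$ crosses between $U$ and $W$, and every red edge lies within a part. For two vertices $x, y \in V^*$ that are adjacent and both of large degree, the set $\Gamma(Q_x) \cap \Gamma(Q_y) \cap I(x,y)^c$ is large (it misses only the small sets $V \sm \Gamma(Q_x)$, $V \sm \Gamma(Q_y)$, $I(x,y)$, plus $K(G)$), and for any $z$ in this set the four-cycle $x x_1 z y_1 y$ with $x_1 \in Q_x$, $y_1 \in Q_y$ being even-blue forces a compatibility relation among $\sigma_x(z)$, $\sigma_y(z)$, the color of $xy$, and the colors of the $Q$-edges. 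Summing (or rather, using that a positive fraction of $z$'s witness this) pins down the color of $xy$ in terms of which side $x$ and $y$ land on, giving exactly the blue-bipartite relation. Vertices of small degree were already removed (they lie in $K_1$), and any two-element ``exceptional'' pair with small common neighborhood is in $K_2$, so every surviving pair is genuinely well-behaved.

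The main obstacle I expect is handling the $o(n)$-sized exceptional sets coherently: each individual vertex $z$ may be inconsistent for a few pairs $(x,y)$, and the forced-color relations are only available for \emph{most} $z$, not all. So the bipartition cannot be read off from a single vertex; one needs a propagation argument showing the local signs $\sigma_x(\cdot)$ glue into one global $2$-coloring of $V^*$ with no contradictions. The cleanest route is probably: show the ``agreement graph'' on the core of $V^*$ (put $x \sim' y$ when they agree, in the sense above, on the color-vs-sides relation for a majority of common neighbors) is complete on a set covering all but $o(n)$ vertices, bootstrap to all of $V^*$ since low-degree and exceptional-pair vertices are gone, and then verify the few remaining edges by hand using that $G^*$ is OBTF. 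Once the global partition $U \sqcup W$ is in place, checking that it certifies blue-biparticity of $G^*$ is a direct case analysis on edge colors using the OBTF hypothesis on triangles $xyz$ with $z$ chosen in the large good set.
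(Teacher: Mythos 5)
Your proposal assembles the right ingredients (the sets $Q_x$ with (\ref{VK1K2}), the bound $|I(x,y)|\leq \eps n$ coming from the maximality of $K_3$, the large common neighborhoods guaranteed by the removal of $K_1$ and $K_2$, and the parity forcing along walks $xx_1zy_1y$ with $x_1\in Q_x$, $y_1\in Q_y$), but it stops short of the one step that actually makes the lemma work, and you flag this yourself: the ``propagation/majority/agreement-graph'' gluing of the local sign functions $\sigma_x(\cdot)$ into a single global $2$-coloring is only described as a plan (``the cleanest route is probably\dots''), not carried out. This is not a removable technicality. Blue-biparticity is an exact property: a single red edge crossing the partition, or a single blue edge inside a part, destroys it. Any definition of the partition by majority over common neighbors, or by agreement ``for most $z$,'' therefore has to be upgraded to a statement holding for \emph{every} edge of $G-K(G)$, and your sketch does not show how the $o(n)$-sized exceptional sets (the vertices outside $\Gamma(Q_x)\cap\Gamma(Q_y)$ and the vertices in $I(x,y)$), which can differ from edge to edge, are prevented from producing a genuine global inconsistency.

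The paper closes exactly this gap with a short parity argument that your write-up is missing: it first proves the exact statement that $G-K(G)$ contains \emph{no} odd-blue cycle of length $4$ or $5$. Given such a cycle $x_1\cdots x_q$ ($q\in\{4,5\}$), one chooses a \emph{single} vertex $z\in\bigcap_i\Gamma(Q_{x_i})$ lying outside $\bigcup_i I(x_i,x_{i+1})$ and outside the cycle (possible since each excluded set has size $O(n^{1-\eps})$ or at most $\eps n$), picks $w_i\in\Gamma(z)\cap Q_{x_i}$, and observes that each closed walk $zw_ix_ix_{i+1}w_{i+1}z$ is even-blue --- either because $z\notin I(x_i,x_{i+1})$ when it is a $5$-cycle, or because any non-simple closed walk of length at most $5$ in an OBTF graph is even-blue. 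Since these walks together with the original cycle traverse every edge an even number of times, the cycle itself is even-blue. With odd-blue cycles of length $3$, $4$, $5$ all excluded, every pair of vertices of $G-K(G)$ (which has diameter at most $2$, as common neighborhoods have size at least $\eps n-o(n)$) is joined only by short paths of one common blue-parity, so the partition by parity relative to one fixed vertex is well-defined and exactly certifies blue-biparticity --- no bootstrapping or case-by-case repair of exceptional edges is needed. Without this (or an equivalent exact cycle-parity statement), your construction of $U\sqcup W$ from a single reference vertex, or from a majority rule, is not justified.
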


\begin{proof}
We first assert that
\beq{short}
\mbox{$G-K(G)$ contains no odd-blue cycle of
length 4 or 5.}
\enq
To see this, suppose
$x_1\dots x_q$ is a cycle in $G':=G-K(G)$ with $q\in\{4,5\}$,
and (with subscripts taken mod $q$) let
$$z\in \bigcap_{i=1}^q\Gamma(Q_{x_i})\sm (\bigcup_{i=1}^qI(x_i,x_{i+1})
\cup \{x_1\dots x_q\}).$$
(Note that there is such a $z$; in fact the size of the set on the r.h.s. is at least
$n - |K_1(G)\cup K_2(G)\cup K_3(G)|-q\log n-2qn^{1-\eps } -q\eps  n -q$,
so essentially
$(1-q\eps )n$.)

Let $w_i \in \Gamma(z)\cap Q_{x_i}$ ($i\in [q]$).
Each of the closed walks $zw_ix_ix_{i+1}w_{i+1}z$ is even-blue,
either (in case it is a 5-cycle) because $z\not\in I(x_i,x_{i+1})$,
or (otherwise) because $G$ is OBTF, where we use the easy
\begin{eqnarray}\label{short2}
\mbox{any non-simple closed walk of length at most 5 }
~~~~~~~~~~~~~~~ \nonumber\\
~~~~~~~~~~~~~~~\mbox{in an OBTF graph is even-blue.}
\end{eqnarray}
But since these walks together with the original cycle use each edge
of $G$ an even number of times, it follows that the original cycle is also
even-blue.

\bigskip
We now define the blue-bipartition for $G'$ in the natural way.
Note that the diameter of $G'$ is at most 2
(in fact any two vertices of $G'$ have at least $\eps  n-o(n)$ common
neighbors), and that (\ref{short}) and (\ref{short2}) imply
that for any two vertices $x,y$, all $(x,y)$-paths of length
at most 2 have the same {\em blue-parity} (defined in the obvious way).
We may thus fix some vertex $x$ and let $U$ consist of those vertices
for which this common parity is even (so $x\in U$) and $W=V(G')\sm U$.
That this is indeed a blue-bipartition is again an easy consequence of
(\ref{short}) and (\ref{short2}).

\end{proof}

\nin
{\em Proof of Theorem} \ref{MT}.
For $G\in \f(n)\sm \X_0$, Lemma \ref{lbb} gives $\gk(G)\leq \gk_1(G)+2(\gk_2(G)+\gk_3(G))$,
so that $\gk(G) \geq t$ implies that either
$\gk_1(G)\geq t/3$ or at least one of $\gk_2(G)$, $\gk_3(G)$ is at least $t/6$.
It follows that
$$
\{G:\gk(G)\geq t\} \sub \X_0\cup \X_1(n,\lceil t/3\rceil)
\cup \X_2(n,\lceil t/6\rceil)\cup \X_3(n,\lceil t/6\rceil)
$$
(since for $G\not\in \X_0$:
$\kappa_1(G)\geq t/3 \Ra G\in \X_1(n,\lceil t/3\rceil)$;
$\kappa_2(G)\geq t/6 \Ra G\in \X_1\cup\X_2(n,\lceil t/6\rceil)$;
$\kappa_3(G)\geq t/6 \Ra G\in \X_1\cup\X_2\cup \X_3(n,\lceil t/6\rceil);$
and
$\X_1\sub \X_1(n,\lceil t/3\rceil)$,
$\X_2\sub \X_2(n,\lceil t/6\rceil)$).

The theorem, with any (fixed, positive) $c < 1-0.6\log_23$ 
satisfying
\beq{c}
2^{-c} > 
2^{H(\eps)+\eps -1}+
2^{H(\eps)}(3/4)^{0.1}+(3/4)^{\eps/2},
\enq
now follows from
Lemmas \ref{Xdoi}-\ref{LX3} and \ref{LX5}.
\qed

From this point we set
$b(n)= 2^{\C{n+1}{2}-1}$ ($\sim |\B(n)|$).

\begin{proof}[Proof of Theorem \ref{OBTFthm}]

We prove Theorem \ref{OBTFthm} by showing by induction that, for some
constant $\gD$, $c$ as in Theorem \ref{MT} and all $n$,

\begin{equation}
\label{FnBn}
F(n)\leq (1+\Delta\cdot 2^{-cn})b(n)
\end{equation}
To see this,
choose $n_0$ large enough so that the previous results in this section
are valid for $n\geq n_0$, and then
choose $\Delta >2$ (say)
so that (\ref{FnBn}) holds for $n\leq n_0$.
Assuming
(\ref{FnBn}) holds for $n-1$, we have, using Theorem \ref{MT} for the first
inequality,
\begin{eqnarray}
|\f(n)\sm \B(n)|&=&|\{G:\gk(G)>0\}|\nonumber\\
&<&
2^{(1-c)sn}F(n-s)+ 2^{n-cn}F(n-1) \nonumber\\
&<&
2^{(1-c)sn}(1+\gD 2^{-c(n-s)})   b(n-s)\nonumber\\
&&~~~~~~~~~~~~~~~~~~~~+
 2^{n-cn}(1+\gD 2^{-c(n-1)})   b(n-1)\nonumber\\
&=&
[2^{\C{s}{2}-csn}(1+\gD 2^{-c(n-s)})\nonumber\\
&&~~~~~~~~~~~~~~~~~~~~+ 
2^{-cn}(1+\gD 2^{-c(n-1)}) ]  b(n).\label{cc}
\end{eqnarray}
So, since $|\B(n)| <b(n)$ and
the coefficient of $b(n)$ in (\ref{cc}) is less
than $\gD 2^{-cn}$, we have (\ref{FnBn}).
\end{proof}

Feeding this back into Theorem \ref{MT}
we obtain a quantitative
strengthening of Theorem \ref{OBTFthm} that will be useful below.
(Recall we assume $G\in \f(n)$.)

\begin{thm}\label{MC}
For any constant $\gd<c/3$, $t\leq s$ and large enough
n,
\beq{corbd}
|\{G:\gk(G)\geq t\}| <  2^{-\gd nt}b(n).
\enq
\end{thm}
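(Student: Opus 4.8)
The plan is to feed the improved bound (\ref{FnBn}) back through the estimates of this section, essentially redoing the computation (\ref{cc}) but now keeping track of the full strength of Theorem \ref{MT} rather than just its specialization to $t=1$. First I would recall that, by the inclusion
$$
\{G:\gk(G)\geq t\} \sub \X_0\cup \X_1(n,\lceil t/3\rceil)
\cup \X_2(n,\lceil t/6\rceil)\cup \X_3(n,\lceil t/6\rceil)
$$
established in the proof of Theorem \ref{MT}, together with Lemmas \ref{Xdoi}--\ref{LX3} and \ref{LX5}, we have (\ref{mainbd}): for any $t\le s$,
$$
|\{G:\gk(G)\geq t\}| < 2^{(1-c)sn}F(n-s)+2^{(1-c)n\lceil t/3\rceil}F(n-\lceil t/3\rceil).
$$
Now I would substitute the bound $F(m)\le (1+\gD 2^{-cm})b(m)$ from (\ref{FnBn}) into both terms, and use $b(m)=2^{\C{m+1}{2}-1}$, so that $b(n-a)=2^{\C{a}{2}-an}b(n)$.

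The first term becomes $2^{(1-c)sn}\cdot 2^{\C{s}{2}-sn}(1+\gD 2^{-c(n-s)})b(n) = 2^{\C{s}{2}-csn}(1+o(1))b(n)$; since $s=C\log n$ with $C>12/c$, the exponent $\C s2 - csn$ is at most $-csn/2 < -6n\log n$, which is far smaller than $-\gd nt$ for any $t\le s$ and $\gd<c/3$ (here one uses $\gd ns\le \gd nC\log n \ll csn$). So the first term is negligible. For the second term, writing $r=\lceil t/3\rceil$, I get
$$
2^{(1-c)nr}\cdot 2^{\C r2 - rn}(1+\gD 2^{-c(n-r)})b(n) = 2^{-cnr + \C r2}(1+o(1))b(n).
$$
Since $r\ge t/3$ we have $cnr \ge cnt/3 > \gd nt$ (strictly, because $\gd<c/3$), and $\C r2 = O(\log^2 n)$ is absorbed by the gap between $cnr/3$-type terms and $\gd nt$; so this term is at most $2^{-\gd nt}b(n)$ for large $n$, with room to spare. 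Adding the two contributions gives (\ref{corbd}).

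I do not expect a serious obstacle here: the only thing to watch is that the inequality $\gd<c/3$ is used in exactly the right place, namely to beat $cr \ge ct/3$ against $\gd t$ with enough slack to swallow the lower-order terms $\C r2$ and $\C s2$ (both polylogarithmic in $n$, hence $o(nt)$ since $t\ge 1$) and the $(1+o(1))$ factors coming from $(1+\gD 2^{-c(n-r)})$. One should also note $t\le s$ is needed so that Theorem \ref{MT} applies and so that the $\X_i(n,\cdot)$ lemmas are invoked with arguments $o(n)$. If one wanted to be careful about small $t$, the bound is weakest at $t=1$, where it just asserts $|\f(n)\sm\B(n)|<2^{-\gd n}b(n)$, which is (\ref{FnBn}) with $\gD 2^{-cn}$ replaced by $2^{-\gd n}$ and is fine since $\gd<c$.
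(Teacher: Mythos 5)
Your proposal is correct and follows essentially the same route as the paper: the paper's proof of Theorem \ref{MC} is exactly the substitution of the just-proved bound $F(m)<2b(m)$ (from (\ref{FnBn})) into the two terms of (\ref{mainbd}), followed by the arithmetic $b(n-a)=2^{\C{a}{2}-an}b(n)$, with the constraint $\gd<c/3$ used to beat $c\lceil t/3\rceil$ against $\gd t$ and the polylogarithmic terms absorbed for large $n$. Your only cosmetic slip is the intermediate claim "$\gd ns\ll csn$" (it is merely a constant factor smaller, $\le csn/3$), but this is exactly the comparison needed and does not affect the argument.
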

\nin
{\em Proof.}
We have (for large enough $n$)
\begin{eqnarray*}
|\{G:\gk(G)\geq t\}| &<&
2^{(1-c)sn}F(n-s)+
2^{(1-c)n\lceil t/3\rceil}F(n-\lceil t/3\rceil)
\\
&<&
2[2^{(1-c)sn}b(n-s)+
2^{(1-c)n\lceil t/3\rceil}\cdot  b(n-\lceil t/3\rceil)]\\
&<&2^{-\gd nt}b(n).
\end{eqnarray*}\qed

In what follows we will also need an analogue of $\gk$
for
edge removals,
say
$$\gc(G) := \min\{|E'|:\mbox{$E'\sub E(G), G-E' $ BB}\}.$$
\begin{Lemma}\label{LE'}
There is a constant $C'$ such that, for
sufficiently large n,
$$|\{G:\gc(G) > C'\sqrt{n}\log^{3/2}n\}| < n^{-3n} b(n).$$
\end{Lemma}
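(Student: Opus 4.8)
The plan is to run the same "construct the graph by revealing a small exceptional set" counting scheme used throughout this section, but now taking the exceptional object to be a near-minimal edge set $E'$ witnessing $\gc(G)$ rather than a vertex set. Suppose $\gc(G) > m := C'\sqrt n\log^{3/2}n$. First I would relate $\gc$ to $\gk$: since $G - K(G)$ is blue-bipartite for $G\notin\X_0\cup\cdots\cup\X_3$ (Lemma \ref{lbb}), deleting all edges meeting $K(G)$ makes $G$ blue-bipartite, so $\gc(G) \le n\,\gk(G)$ — but this is far too lossy to reach $m$. The point must instead be that for $G$ with $\gk(G)$ small, most of $G$ is already blue-bipartite on a huge vertex set, and a near-minimal $E'$ lives almost entirely inside the tiny part; so I would split according to $\gk(G)$. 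For $\gk(G) \ge s/\log n$ (say), Theorem \ref{MC} already gives $|\{G:\gk(G)\ge s/\log n\}| < 2^{-\gd ns/\log n}b(n) = 2^{-\gd'C n}b(n) \ll n^{-3n}b(n)$ once $C$ is large, so that range is free. So assume $\gk(G) < s/\log n$; then $G$ becomes BB after removing at most $(s/\log n)\cdot n = Cn^2/\log n$ edges meeting $K(G)$, and in particular $\gc(G) \le |K(G)|\cdot n$.

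For the main range, I would bound $|\{G : \gc(G) = m'\}|$ for each $m' > m$ by the usual recipe: a graph with $\gc(G) = m'$ is obtained by choosing the witnessing edge set $E'$ (at most $\binom{\binom n2}{m'} < n^{2m'}$ ways), choosing the blue-bipartite graph $G - E'$ on $[n]$ ($|\B(n)| < b(n)$ ways, or more precisely we should keep the structure of $\B(n)$), and then choosing the colors on the $m'$ edges of $E'$ ($3^{m'}$ ways). That gives $|\{G:\gc(G)=m'\}| < n^{2m'}3^{m'}b(n)$, which is useless as written because $n^{2m'}$ is enormous. The fix, exactly as in the proof of \eqref{BB}, is that $G - E'$ is not an arbitrary BB graph: its blue-bipartition $U\sqcup W$ is (for almost all such graphs) forced, and once $U, W$ are fixed the number of uncolored graphs realizing them is only $2^{\binom n2 + n - 1}/\,(\text{something})$ — no, the real saving is different. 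The honest approach: count pairs $(G, E')$ with $|E'| = m'$, $G - E'$ BB; the number of BB graphs on $[n]$ is $\sim b(n) = 2^{\binom{n+1}2 - 1}$, and the total number of colored graphs on $n$ vertices is $3^{\binom n2}$, which is $b(n)^{\Theta(1)}$ with a constant bigger than $1$; so this naive count does not even beat $b(n)$. Therefore the only viable route is: a graph with $\gc(G) > m$ but $\gk(G) < s/\log n$ has $K(G)$ of size $< s/\log n$, and all of $E'$ may be taken inside $\nabla(K(G),[n])$, which has at most $(s/\log n)\cdot n$ edges; but if $\gc(G) > m = C'\sqrt n\log^{3/2}n$ while we could have used only the $< sn/\log n \approx Cn^2/\log n$ edges at $K(G)$, there's no contradiction — so I must sharpen the relation between $\gc$ and $\gk$.

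Here is the sharpening I expect to be the crux, and the main obstacle: for $G$ with $\gk(G) = k$ small, I claim one can make $G$ blue-bipartite by deleting only $O(k\sqrt n \log^{3/2} n)$ edges, not $O(kn)$. The idea is that after deleting $K := K(G)$ the graph $G - K$ is BB with bipartition $U\sqcup W$ of $[n]\sm K$; now add the vertices of $K$ back one at a time. Each $x\in K$ has a neighborhood $\Gamma_x$; assign $x$ to whichever side of $U\sqcup W$ agrees with the color-pattern of most of its edges, and delete the "wrong" edges at $x$ (those blue edges inside a side, or red edges across). The number deleted at $x$ is the number of "defects," and I would argue — using OBTF-ness and the fact (from Lemma \ref{almostall} / the $Q_x$ machinery) that $x$ sees a $\log n$-subset $Q_x$ of its neighborhood whose second neighborhood is almost everything — that $x$'s edges are consistent with one side except on $O(\sqrt n\log^{3/2}n)$ vertices; the $\sqrt n$ comes from a variance/second-moment estimate on how many vertices can simultaneously disagree with the majority pattern on the small set $Q_x$, given OBTF. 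If that consistency bound holds with only $o(1)$ fraction of exceptions over all such $x$ and $G$, then for $G\notin\X_0\cup\cdots\cup\X_3$ with $\gk(G) = k$ we get $\gc(G) = O(k\sqrt n\log^{3/2}n) + (\text{total defects}) \le C'\sqrt n \log^{3/2} n$ once $k$ is bounded, so $\{\gc(G) > C'\sqrt n\log^{3/2}n\}$ is contained in $\X_0\cup\cdots\cup\X_3 \cup\{\gk(G)\ge \text{const}\}$, and then Lemmas \ref{Xdoi}, \ref{Xunu}, \ref{LX3}, \ref{LX5} together with Theorem \ref{MC} give a bound far below $n^{-3n}b(n)$ (each of those lemmas loses only $2^{\Theta(n)}$ or $(3/4)^{\Theta(n)}$ per unit of the relevant $\kappa$, versus the slack $n^{-3n} = 2^{-3n\log n}$ we are allowed, which a suitably large choice of $C$ in $s = C\log n$ amply covers). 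The delicate point — and where I'd spend the real work — is establishing the $O(\sqrt n\log^{3/2}n)$ per-vertex defect bound, i.e. showing that OBTF-ness plus near-complete second neighborhoods force each added vertex to be "almost consistent" with one side, with the error controlled by a square-root rather than a linear term; I'd expect this to come from a concentration argument on the overlap $|\Gamma_x\cap \Gamma_y|$ for $y$ ranging over the established BB part, analogous to the second-moment computations already appearing in Lemmas \ref{almostall} and \ref{LX3}.
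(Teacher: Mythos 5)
Your overall skeleton is the right one and matches the paper's: dispose of $\gk(G)\geq (12/c)\log n$ via Theorem \ref{MC} (this part of your argument is fine), and for $\gk(G)$ logarithmic show that each vertex of the small exceptional set $K$ contributes only about $\sqrt{n}\,\mathrm{polylog}(n)$ ``defective'' edges, so that $\gc(G)\leq \C{|K|}{2}+|K|\cdot O(\sqrt{n}\log^{1/2} n)<C'\sqrt n\log^{3/2}n$. But the step you yourself identify as the crux --- the per-vertex defect bound --- is left unproven, and the tool you propose for it (a concentration/second-moment estimate on the overlaps $|\Gamma_x\cap\Gamma_y|$) cannot supply it: for a fixed graph there is nothing to concentrate, and the bound is simply false for individual OBTF graphs with small $\gk$. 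Indeed, if $x\in K$ is joined in red to a set $R_X\sub X$ and in blue to a set $B_X\sub X$ (where $X\cup Y$ is a blue-bipartition of $G-K$) and $\nabla(R_X,B_X)=\0$, then both sets can have linear size without creating an odd-blue triangle, and no assignment of $x$ to a side avoids linearly many bad edges at $x$. So the desired statement can only hold after excluding a pathological class of graphs, and that exclusion must itself be paid for by counting; your proposal never identifies the pathology nor bounds its frequency.

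The paper's proof does exactly this. For $x\in K$ split $\Gamma_x\sm K$ into $R_X,B_X,R_Y,B_Y$ according to side of the bipartition and edge color; OBTF forces $\nabla(R_X,B_X)=\nabla(R_X,R_Y)=\nabla(B_X,B_Y)=\nabla(R_Y,B_Y)=\0$. Hence if two ``incompatible'' classes both had size at least $A\sqrt{n\log n}$, $G$ would contain two disjoint sets of that size with no edges between them. The class $\Y(n)$ of OBTF graphs admitting such a configuration (together with a set $K$ of size $O(\log n)$ with $G-K$ BB) is shown by a direct enumeration to have size at most $\exp_2[\C{n}{2}-C''n\log n]\ll n^{-3n}b(n)$: the $A^2n\log n$ forbidden potential edges between the two sets are the entire saving, which is why the threshold is taken at $\sqrt{n\log n}$. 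For $G\notin\Y(n)$ with $|K|<(12/c)\log n$ one then deletes, at each $x\in K$, the at most $2A\sqrt{n\log n}$ edges to the small classes (the large ones, if there are two, are automatically a compatible pair such as $R_X$ and $B_Y$), plus $E(K)$, and $G-E'$ is BB deterministically --- no concentration and none of the $Q_x$/Lemma \ref{almostall} machinery is needed. This dichotomy (either two large mutually non-adjacent sets, which is rare by counting, or per-vertex consistency up to $O(\sqrt{n\log n})$ edges) is the missing idea in your write-up.
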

\nin

\begin{proof}
Fix $A> ((12\log_23)/c +3)^{1/2}$.
The story here is that $\gk(G)$ small implies $\gc(G)$ small unless
we encounter the following pathological situation.
Let $\Y(n) $ consist of those $G\in \f(n)$ for which there is
some $K\sub V$ of size at most $k:=(12\log n)/c$
such that $G-K$ is BB and there are disjoint $S,T\sub V\sm K$,
each of size $A\sqrt{n\log n}$, with $\nabla_G(S,T)=\0$.
We assert that, for any constant $C''<A^2 -(12\log_23)/c$ 
(and large $n$),

\beq{Y(n)}
|\Y(n)| < \exp_2[\C{n}{2} -C''n\log n].
\enq
This is a routine calculation:
the number of choices for $G\in \Y(n)$ is at most
$$
3^n\exp_3[\C{k}{2}+k(n-k)]\exp_2[\C{n}{2}-A^2n\log n],
$$
where the first term corresponds to the choices of 
$K$, the blue-bipartition and $S,T$;
the second to edges of $G$ meeting $K$;
and the third to the remaining edges
(whose colors are determined by the blue-bipartition).
This gives (\ref{Y(n)}).

\medskip
Thus, in view of
Theorem \ref{MC} (noting $(12/c)\log n < s$; see (\ref{C})),
Lemma \ref{LE'} will follow from
\beq{gbgc}
G\in \f(n)\sm \Y(n), ~\gk(G) < (12/c)\log n ~~\Ra ~~
\gc(G) < C'\sqrt{n}\log^{3/2} n
\enq
(for a suitable $C'$).
To see this, suppose $G\not\in \Y(n)$
and $G-K$ is BB with
$|K| < (12/c)\log n$.
Let $X\cup Y$ be a blue-bipartition of $G-K$,
and write $R$ and $B$ for the sets of red and blue edges of $G$.
Given
$x\in K$, let
$R_X=R_X(x)= \{v\in X: xv\in R\}$,
and define $B_X,R_Y,B_Y$ similarly.
Then $G$ OBTF implies
$$
\nabla(R_X,B_X) =\nabla(R_X,R_Y) =\nabla(B_X,B_Y) =\nabla(R_Y,B_Y) =\0,$$
whence (since $G\not\in \Y(n)$)
WMA that at most two of
$R_X,B_X,R_Y,B_Y$
have size at least $A\sqrt{n\log n}$,
and if exactly two then these must be either
$R_X$ and $B_Y$, or $B_X$ and $R_Y$.
Thus there is a set $E'(x)$ of at most $2A\sqrt{n\log n}$
edges at $x$
so that either $\nabla(x,X)\sm E'(x)\sub R$ and
$\nabla(x,Y)\sm E'(x)\sub B$
or vice versa.
Setting $E' = E(K)\cup \bigcup\{E'(x):x\in K\}$,
we find that $G-E'$ is BB with
$|E'| <  \C{|K|}{2} +2A|K|\sqrt{n\log n}  <
C'\sqrt{n}\log^{3/2} n,$
for any $C' > 24A/c$ (and large $n$).
\end{proof}

\section{Blue-bipartite graphs}\label{SBB}

We continue to assume $G\in \f(n)$
and now need some understanding of the sizes of the sets
$$
\pee(G) := \{P\in \pee(n): G(P)=G\}
$$
(see following (\ref{SATe2}) for $\pee(n)$ and $G(P)$).
Recall (see property (e) of $G(P)$) that $G(P)$ determines the cover graph
of $P$; thus,
as observed in \cite{BBL}, we trivially have
\begin{equation}
\label{trivialUB}
|\pee(G)|<(2n)! <n^{2n}  ~~~~~\forall G\in \mathcal{F}(n),
\end{equation}
since a poset is determined by its cover graph and any one of its
linear extensions.

If $P\in \pee(G)$ then the cover graph of $P$ is
$C(G)$, defined to be the graph on $\{x_1\dots x_n,\bar{x}_1\dots\bar{x}_n\}$
with, for each $w_iw_j\in E(G)$,
edges $x_ix_j$ and $\ov{x}_i\ov{x}_j$ if $w_iw_j$ is blue, and
$x_i\ov{x}_j$ and $\ov{x}_ix_j$ if it is red.
By property (c) in the definition of $\mathcal{P}(n)$,
the orientation of either of the edges of $C(G)$ corresponding to
a given edge of $G$ determines the orientation of the other; so
we speak, a little abusively, of orienting the edges of $G$.

A basic observation is that the orientations of the
edges of any triangle $w_iw_jw_k$ of $G$,
are determined by the orientation of any one of them.
Suppose for instance (other cases are similar)
that the edges of $w_iw_jw_k$ are all {\em red}, and that $x_i<\bar{x_j}$
(so also $x_j<\bar{x_i}$).
We must then have $\bar{x_k} > x_i,x_j$
(and $x_k < \bar{x_i},\bar{x_j}$), since (e.g.)
$\bar{x_k}< x_i$ would imply $x_k > \bar{x_i}$,
and then $x_k > \bar{x_j}$ would give $x_k > \bar{x_k}$,
while $x_k < \bar{x_j}$ would give $\bar{x_j} > x_j$,
in either case a contradiction.
It follows that the orientation of either of
$e,f\in E(G)$ determines the orientation of the other
whenever there is a sequence
$T_0,\ldots,T_l$ of triangles with $e$ (resp. $f$)
an edge of $T_1$ (resp. $T_l$) and
$T_{i-1},T_i$ sharing an edge for each $i\in [l]$.
We then write $e\equiv f$, and call the classes of this equivalence relation
{\em triangle-components} of $G$.  If there is just one equivalence class,
we say $G$ is {\em triangle-connected}.

In general the preceding discussion bounds $|\pee(G)|$ by $2^{\eta(G)}$ with
$\eta(G)$ the number of triangle components of $G$;
but all we need from this is

\begin{Lemma}\label{L2ors}
If $G\in \B(n)$ is triangle-connected then $|\pee(G)|\leq 2$.
\end{Lemma}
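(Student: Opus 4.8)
The plan is to show that in a triangle-connected blue-bipartite graph, once we orient a single edge, the orientation of every other edge is forced, so there are at most two posets in $\pee(G)$ (one for each choice of orientation of that initial edge, and these two are related by the global symmetry $x_i \leftrightarrow \bar x_i$). The key fact established just before the lemma statement is that within a single triangle of $G$, the orientation of any one edge determines the orientations of all three; and that edges $e \equiv f$ (lying in a common chain of edge-sharing triangles) have their orientations mutually determined. Since $G$ is triangle-connected, all edges lie in one triangle-component, so the orientation of one fixed edge $e_0$ determines the orientations of all edges of $G$.

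First I would observe that, by the discussion preceding the lemma (the explicit case analysis for a red triangle, with the remark that ``other cases are similar''), for any triangle of $G$ the orientation of one edge forces the other two; hence $e \equiv f$ implies that fixing the orientation of $e$ fixes that of $f$. Next, since $G$ is triangle-connected, every edge is $\equiv$-equivalent to a fixed reference edge $e_0$. Therefore there are at most two ways to orient all the edges of $G$ consistently with the constraints coming from triangles: the two corresponding to the two possible orientations of $e_0$. Finally I would recall that a member $P$ of $\pee(G)$ is determined by the induced orientation of the edges of $G$ (equivalently of $C(G)$): property (c) says orienting one edge of $C(G)$ in a corresponding pair orients the other, and $P$ is the poset whose cover relations are exactly these oriented edges — the cover graph of $P$ is $C(G)$ by the remarks following \eqref{SATe2}, and a poset is determined by its cover graph together with a choice of how each covering pair is directed, here pinned down once all edges of $G$ are oriented. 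Consequently $|\pee(G)| \le 2$.

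The one point requiring a little care is that each of the two candidate orientations of $e_0$ actually does extend to a \emph{globally consistent} orientation (i.e. that $\pee(G)$ is not empty or forced to have size $1$ for some parity obstruction) — but we do not need this for the upper bound: even granting at most two extensions we are done, and in any case the two arise from genuine posets related by the involution $x_i \mapsto \bar x_i$ (which swaps the two edges of $C(G)$ over each edge of $G$ and hence reverses the chosen orientation). The only genuine obstacle is making sure the triangle-by-triangle propagation is actually consistent — that walking around the triangle-component and returning to $e_0$ cannot force the opposite orientation on $e_0$ — but this consistency is automatic because every $P \in \pee(G)$ already realizes \emph{some} orientation, so for graphs with $\pee(G) \neq \emptyset$ the propagation is consistent by definition, and if $\pee(G) = \emptyset$ the bound $|\pee(G)| \le 2$ holds trivially. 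Thus no separate argument is needed, and the proof is short.
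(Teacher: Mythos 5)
Your argument is exactly the one the paper intends: the discussion preceding the lemma already shows that within a triangle-component the orientation of one edge forces all the others (so $|\pee(G)|\leq 2^{\eta(G)}$ in general), and since a poset is recovered from its cover graph $C(G)$ plus the chosen directions, triangle-connectivity gives at most two choices. Your added remark that consistency of the propagation is not needed for the upper bound is correct, so the proposal matches the paper's (implicit) proof.
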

\nin

\nin
(Actually it's easy to see that equality holds.)
The last piece needed for the proof of Theorem \ref{SATthm} is

\begin{Lemma}\label{Llast}
There are at most $2^{-\gO(n)} b(n)$
$P\in \pee(n)$ with $G(P)$ in $\B(n)$ and not triangle-connected.
\end{Lemma}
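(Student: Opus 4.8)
The plan is to show that a blue-bipartite graph $G\in\B(n)$ that is \emph{not} triangle-connected must either be anomalously sparse (so that such $G$ are rare) or else be ``almost'' triangle-connected in a way that still forces $|\pee(G)|$ to be small, and then to sum over $G$. The point of departure is Lemma \ref{L2ors}: since $|\pee(G)|\le 2^{\eta(G)}$ with $\eta(G)$ the number of triangle-components, and since each $P$ with $G(P)\in\B(n)$ contributes to the count, it suffices to bound
$$
\sum_{G\in\B(n),\ \eta(G)\ge 2} 2^{\eta(G)} .
$$
So the real task is: the number of $G\in\B(n)$ with $\eta(G)\ge 2$ (weighted by $2^{\eta(G)}$) is $2^{-\gO(n)}b(n)$.

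First I would argue that for a ``typical'' dense $G$, \emph{every} edge lies in many triangles that chain together, so $\eta(G)=1$. Concretely, fix a blue-bipartition $X\sqcup W$ of $G$. The edges of $G$ split into the red edges inside $X$, the red edges inside $W$, and the blue edges across. If $G$ is dense---say every vertex has degree $\ge \eps n$ into each side of any relevant set, which holds outside the rare classes $\X_0,\dots,\X_3$ and outside the pathology of Lemma \ref{LE'}---then any two edges of $G$ sharing a vertex lie in a common triangle (pick a common neighbor, of which there are $\gO(n)$), and more generally any edge is triangle-equivalent to any other via a short chain through common neighbors. This gives $\eta(G)=1$, hence $|\pee(G)|\le 2$, for all $G$ outside a set of size $2^{-\gO(n)}b(n)$ (using Theorem \ref{MC} with $t$ a large constant times $\log n$, and Lemma \ref{LE'}, to discard the exceptional $G$). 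The contribution of these good $G$ to the count in Lemma \ref{Llast} is then $\le 2\cdot|\B(n)|$, which is the wrong bound---so the good $G$ must instead be handled by the companion lemma (Lemma \ref{L2ors}) directly inside the main proof of Theorem \ref{SATthm}, and Lemma \ref{Llast} is only about the \emph{bad} $G$. Thus the statement I actually need is: the bad $G$ (those forcing $\eta(G)\ge 2$) number at most $2^{-\gO(n)}b(n)$, and for each the weight $2^{\eta(G)}$ is at most, say, $2^{O(\gc(G))}$ or $2^{O(\gk(G)\cdot n)}$, still leaving room to spare.

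The key quantitative step is to bound $\eta(G)$ in terms of how far $G$ is from being dense-and-blue-bipartite. I would show: if $G-K$ is BB for some small $K$, and $G-E'$ is BB for some small edge set $E'$, then every triangle-component of $G$ either meets $K\cup V(E')$ or else (by the common-neighbor argument above, applied inside the BB part $G-E'$, which is dense) is the \emph{unique} big triangle-component. Hence
$$
\eta(G) \le 1 + O\big(|K|\cdot n\big) + O\big(|E'|\big) \le 1 + O(\gk(G)\,n) + O(\gc(G)).
$$
Now I split $\B(n)$ according to the value of $\gk(G)$ (and invoke Lemma \ref{LE'} to control $\gc(G)$ for all but a $n^{-3n}b(n)$ fraction, a negligible error). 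For $G$ with $\gk(G)=0$ (genuinely BB) and $\gc(G)\le C'\sqrt n\log^{3/2}n$: here $\eta(G)\le 1+O(\sqrt n \log^{3/2}n)$, so $2^{\eta(G)}=2^{o(n)}$; but I claim \emph{most} such genuinely-BB $G$ have $\eta(G)=1$, and the number with $\eta(G)\ge2$ is $2^{-\gO(n)}b(n)$---this is the heart of the matter and I expect it to be the main obstacle. For $G$ with $\gk(G)=t\ge1$: Theorem \ref{MC} gives $|\{\gk(G)\ge t\}|<2^{-\gd nt}b(n)$, and the weight is $2^{\eta(G)}\le 2^{O(tn)+2^{o(n)}}$; choosing $\gd$ against the implied constant (which is $O(1)$ per unit of $|K|$, with $n$ factors) makes $\sum_{t\ge1}2^{-\gd nt}\cdot 2^{O(tn)}$ convergent and $2^{-\gO(n)}$ only if the constant in $\eta\le 1+O(\gk n)$ beats $\gd n$; since $\gd<c/3$ is at our disposal only up to $c$, I would instead \emph{improve} the bound on $\eta$ for BB-up-to-$K$ graphs to $\eta(G)\le 1+O(|K|+\gc(G))$ (not $O(|K|n)$)---the extra factor of $n$ is not needed because removing one vertex $x$ from the BB part merges at most $O(d(x))=O(n)$ triangle-components, \emph{but} those $O(n)$ merges can be absorbed into $\gc$ by deleting $O(n)$ edges at $x$, so effectively $\eta(G)\le 1+O(\gc(G)+|K|)$ with $\gc$ already controlled by Lemma \ref{LE'}.

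So the main obstacle, and the step I would spend the most care on, is the case $\gk(G)=0$: showing that among the roughly $b(n)$ connected BB graphs, only a $2^{-\gO(n)}$ fraction fail to be triangle-connected. For this I would use the standard ``deficiency'' counting: a connected BB $G$ on parts of sizes $a,n-a$ with $G$ not triangle-connected must have some ``cut'' structure---e.g., the blue edges across the bipartition fail to triangulate, which by the common-neighbor argument forces either one side to be small ($O(\log n)$, contributing $\le\sum_a\binom na 2^{O(a n)}\cdot 2^{\binom{a}{2}+\binom{n-a}{2}}=2^{-\gO(n)}b(n)$) or a low-degree vertex or an empty bipartite cut $\nabla(S,T)=\0$ with $S,T$ of size $\gO(\sqrt n\log n)$---exactly the pathology $\Y(n)$ already shown to have size $2^{\binom n2-\gO(n\log n)}=2^{-\gO(n)}b(n)$ in the proof of Lemma \ref{LE'}. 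Assembling these three cases (small side, low-degree vertex, empty cut) covers all non-triangle-connected BB $G$, each class has size $2^{-\gO(n)}b(n)$, and in each the weight $2^{\eta(G)}=2^{o(n)}$ is dominated. Summing gives Lemma \ref{Llast}.
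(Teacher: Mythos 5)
There is a genuine gap, and it sits exactly where you predicted the main obstacle would be: the case of genuinely blue-bipartite $G$ (which is in fact the \emph{only} case the lemma concerns, since the hypothesis is $G(P)\in\B(n)$; your excursions through $\gk(G)\ge 1$ and Theorem \ref{MC} are out of scope here). Your proposed trichotomy --- a non-triangle-connected connected BB graph must have a small side, a low-degree vertex, or an empty cut $\nabla(S,T)=\0$ with $S,T$ of size $\gO(\sqrt{n\log n})$ --- is false. The obstruction you are missing is an \emph{adjacent pair with very few common neighbors}: one can have a BB graph with all degrees at least $\gd n$ and no large empty cut in which some neighborhood graph $G[\Gamma_x]$ has a small stray component $K_x$ (e.g.\ an edge $uw$ with $\Gamma_x\cap\Gamma_u=\{w\}$, $\Gamma_x\cap\Gamma_w=\{u\}$, $\Gamma_u\cap\Gamma_w=\{x\}$), so that the triangle $xuw$ is its own triangle-component. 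Pairwise ``few common neighbors'' is a disjointness condition on neighborhoods, not an empty-cut condition, so it is not excluded by your third alternative, nor by degree or side-size conditions; the paper's Proposition \ref{Ppath}(iv) explicitly \emph{allows} up to $D\log n$ such exceptional partners per vertex. For the same reason your ``improved'' bound $\eta(G)\le 1+O(|K|+\gc(G))$ cannot be right: for a plain BB graph it reads $\eta(G)\le 1$, i.e.\ every BB graph is triangle-connected, which is absurd (any triangle-free BB graph has $\eta=|E|$; the dense example above gives dense counterexamples satisfying all your regularity conditions). Since the whole weighting $2^{\eta(G)}$ in your reduction rests on that bound, the estimate collapses precisely in the case you identified as the heart of the matter.

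What is actually needed --- and what the paper does --- is a quantitative trade-off for these stray components: after discarding the genuinely pathological $G$ (low-degree vertices, large empty cuts, disconnected-looking neighborhoods, too many low-codegree pairs; this part of your plan matches Proposition \ref{Ppath}), one encodes each remaining non-triangle-connected $G$ by the sets $K_x=\gG_x'\sm L_x$ and shows, via the sets $\U_H$, inequality (\ref{fewx}) and the random-ordering/decision-tree argument, that each unit of $\sum_x k_x$ reduces the number of available graphs by a factor $\exp_2[-\gO(n/\log n)]$, while increasing the number of admissible orientations (beyond the two of Lemma \ref{L2ors}) by only a factor $2$ per unit of $\sum k_x$ plus $2^{\gd n}$ per low-degree vertex. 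It is this bit-for-bit comparison between the rarity of the anomaly and the extra orientations it permits --- not a clean structural dichotomy --- that makes the sum in (\ref{H(n)}) small, and nothing in your proposal substitutes for it.
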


\begin{proof}

Fix $\gd >0 $ with $5(1-H(\gd)) > 3\gd$,
and for $G\in \B(n)$ let
$X(G) =\{x\in V:d_G(x) < \gd n\}$.  Set $D=5/\gd$.
We first dispose of some pathologies:

\begin{prop}\label{Ppath}
All but
at most
$
n^{-3n}b(n)
$
$G\in \B(n)$ satisfy

\mn
{\rm (i)}  $|X(G)|< D\log n$;

\mn
{\rm (ii)}  $\not\exists $ disjoint $Y,Z\sub V$ with $|Y||Z| = Dn\log n$ and
$\nabla(Y,Z)=\0$;

\mn
{\rm (iii)}
$\forall x\in V\sm X(G)$, the size of the largest connected component of $G[\gG_x]$
is at least $d_G(x)- D\log n$;

\mn
{\rm (iv)}
$\forall x\in V\sm X(G)$,
$|\{y\in V\sm X(G): |\gG_x\cap \gG_y|< D\sqrt{n\log n}\}|< D\log n.$
\end{prop}

\begin{proof}
(i)  We may specify $G\in \B(n)$ violating (i) by choosing:
a blue-bipartition $S\cup T$; $X'\sub X(G)$ of size $D\log n$;
$E(X')\cup \nabla(X',V\sm X')$; and $G-X'$.
The numbers of ways to make these choices are at most:
$2^n$; $\C{n}{D\log n}$;
$(\sum_{i< \gd n}\C{n}{i})^{D\log n}$;
and $\exp_2[\C{n-D\log n}{2}]$; and,
in view of
our restriction on $\gd$, the product of these bounds is
much less than
$n^{-3n}b(n)$.

\mn
(ii)
For use in (iii) we show a slightly stronger version, say
 (ii$'$), which is just (ii) with $D$ replaced by 4.
We may specify $G\in \B(n)$ violating (ii$'$)
by choosing a blue-bipartition $S\cup T$ and $Y,Z$
in at most
(say) $5^n$ ways,
and then the edges of $G$ in at most
$\exp_2[\C{n}{2} -4n\log n]$ ways.

\mn
(iii)
Here we simply observe that any $G\in \B(n)$ satisfying (ii$'$)
also satisfies (iii).
To see this,
notice that if $d_G(x) \geq \gd n$ and $G$ satisfies (ii$'$), then
there is no $K\sub \gG_x$ with $|K|\in (D\log n,d_G(x)-D\log n)$ and
$\nabla(K,\gG_x\sm K)=\0$.
But then if (iii) fails at $x$, it must be that all components
of $G[\gG_x]$ have size less than $D\log n$, in which case
we get the supposedly nonexistent
$K$ as a union of components.

\mn
(iv)
Here, with $k=D\log n$, we may specify a violator by choosing:
a blue-bipartition $S\cup T$; $x\in V$; $\gG_x$ of size at least $\gd n$;
$y_1\dots y_k\in V$;
$\gG_{y_i}\cap \gG_x$ of size at most $r:= D\sqrt{n\log n}$ (for $i\in [k]$);
and $E(G)\sm (\nabla(x)\cup \nabla(\{y_1\dots y_k\}),\gG_x)$.
The number of possibilities for this whole procedure is at most
$$
\mbox{$2^n\cdot n\cdot 2^n\cdot n^k\cdot
\max_{m\geq\gd n}\{(2 \C{m}{r})^k\exp_2 [\C{n}{2}-km +k^2]\}.$}
$$
(We used $\sum\{\C{m}{i}: i\leq r\}< 2\C{m}{r}$; the irrelevant $k^2$ allows
for some $y_i$'s in $\gG_x$;
of course we could have strengthened $D\sqrt{n\log n}$ to some $\Omega(n)$.)

\end{proof}

We now return to the proof of Lemma \ref{Llast}.
Let $\h(n)$ consist of those $G\in \B(n)$ that are not triangle-connected
and for which (i)-(iv) of Proposition \ref{Ppath} hold.
The proposition and (\ref{trivialUB}) imply that Lemma \ref{Llast} will follow from
\beq{H(n)}
\sum\{|\pee(G)|:G\in \h(n)\} < 2^{-\Omega(n)}b(n).
\enq

Temporarily fix $G\in \h(n)$ and set $X=X(G)$
and $W=W(G)=V\sm X$.
For $x\in W$ let $\gG_x'=\gG_x\cap W$.
Let $L_x$ be the intersection of
(the vertex set of) the largest connected
component of $G[\gG_x]$ with $\gG_x'$, $K_x = \gG_x'\sm L_x$, and
$E_x=\nabla(x,K_x)$, and
observe that
all edges contained in $\{x\}\cup L_x$ lie in the
same triangle component of $G$, say $\cee(x)$.

For $x,y\in W$, write $x\lra y$
if
$E(L_x\cap L_y)\neq\0$, and note this implies $\cee(x)=\cee(y)$.
By (ii)
we have $x\lra y$ whenever
$|L_x\cap L_y|> 2\sqrt{Dn\log n}$,
whence, by (iv) and (i),
$$ |\{y\in W:y\hspace{.01in}\not\hspace{-.015in}
\lra x\}| < D\log n ~~~~\forall x\in W.$$
In particular, ``$\lra$" is the edge set of a connected graph
on $W$, implying all triangle components $\cee(x)$
are
the same; that is, $E(W)-\cup\{E_x:x\in W\}$ is 
contained in a single triangle-component of $G$.
Note also that $z\in K_x$ implies $|\gG_x'\cap \gG_z'| <|K_x| < D\log n$
(by (iii)),
so that, again using (iv) and (i), we have
\beq{fewx}
\mbox{$ |\{x\in W:z\in K_x\}| < \min\{D\log n,\sum_{x\in W} |K_x|+1\} ~~~~\forall z\in W.$}
 \enq
(The extra 1 in the trivial second bound will sometimes save us from dividing
by zero.)
In what follows we set $\xx=|X|$,
$m=|W|$ ($= n-\xx$),
$k_x=|K_x|$ and
$\uk =(k_x:x\in W) \in [0,D\log n]^W$.

We now consider the sum in (\ref{H(n)}), i.e. the number of ways
to choose $G\in \h(n)$ and $P\in \pee(G)$.
As usual there are $2^{n-1}$ ways to choose the blue-bipartition.
We then choose $X=X(G)$ and the edges meeting $X$,
the number
of ways to do this
for a given
$\xx$
being
at most $\C{n}{\xx}\C{n}{<\gd n}^t<\exp_2[(\log n + H(\gd)n)t]$,
define $W$ and $\gG_x'$ as above, and let $H=G[W]$.
{\em Vertices discussed from this point are assumed to lie in $W$},
and we set $d_x'=d_H(x)$.

We first consider a fixed $\uk$, setting
$g(\uk) =\min\{D\log n,\sum k_x+1\}$.
There are at most $\prod \C{m}{k_x} < \exp_2[\sum k_x\log n]$ ways to choose
the sets $K_x $.  Once these have been chosen, we write
$\h$ for the set of possibilities remaining for $H$.
For a particular $H\in \h$,
let $\U_H = \{\{y,z\}:\exists x ~ y\in K_x,z\in L_x\}$.
By (\ref{fewx}) we have
\beq{UH}
|\U_H| > \frac{1}{g(\uk)} \sum_x(d_x'-k_x)k_x > \frac{\gd n}{2g(\uk)}\sum k_x.
\enq

Given an ordering $\gs =(x_1\dots x_m)$ of $W$, we specify $H$ by choosing,
for $i= 1\dots m-1$,
$\nabla (x_i,\{x_{i+1}\dots x_m\}\sm K_{x_i})$.
Note that if $i<j,l$, and exactly one of $x_j,x_l$ belongs to
each of $K_{x_i},L_{x_i}$, then
$x_j\not\sim x_l$ is established in the processing of $x_i$,
so we never need to consider potential edge $x_jx_l$ directly.
Thus the number of choices, say $f(\gs,H)$,
that we actually make in producing
a specific $H$ is at most
$$\C{m}{2} - |\{(i,\{j,l\}):i<j,l ; x_j\in K_{x_i}, x_l\in L_{x_i}\}|.$$
For a fixed $H$ and random (uniform) $\gs$, the expectation of the
subtracted expression is at least $|\U_H|/3 $.
This gives (using (\ref{UH}))
\begin{eqnarray}\label{sigH}
\frac{1}{m!}\sum_{\gs}\sum_Hf(\gs,H) &=&
\sum_H\frac{1}{m!}\sum_{\gs}f(\gs,H) \nonumber\\
&<&
\left( \C{m}{2} - \frac{\gd n}{6g(\uk)}\sum k_x\right)|\h|.
\end{eqnarray}
Thus there is some $\gs$ for which
$\sum_Hf(\gs,H)$ is at most the r.h.s. of (\ref{sigH}),
whence, we assert,
$$|\h| < \exp_2[\C{m}{2} - \frac{\gd n}{6g(\uk)}\sum k_x].$$
{\em Proof.}
This is a standard observation:
for a given $\gs$ we may think of the above procedure as
a decision tree, with $f(\gs,H)$ the length of the path
leading to the leaf $H$;
and we then have
$$
1\geq \sum_H 2^{-f(\gs,H)} \geq |\h| \exp_2[-|\h|^{-1}\sum_Hf(\gs,H)].
$$\qqed

Finally, we need to choose an orientation.
By Lemma \ref{L2ors}
there are just two ways to orient the edges of the 
triangle component of $G$ containing
$ H-\cup\{E_x:x\in W\}$.
We then extend to $\cup\{E_x:x\in W\}$ and the remaining edges meeting $X$ 
in at most $\exp_2[\gd nt + \sum\{k_x:x\in W\}]$ ways.
In summary the number of ways to choose the pair $(G,P)$ is less than
\beq{big1}
2^n\sum_{\xx}\sum_{\uk}\exp_2[\C{m}{2} + ((H(\gd)+\gd)n +\log n)\xx
+ (1+\log n -\frac{\gd n}{6g(\uk)})\sum_{x\in W} k_x],
\enq
with the double sum over $\xx\in [0,D\log n]$ and $\uk\in [0,D\log n]^m$,
excluding the $(0,\underline{0})$-term, which counts
only triangle-connected graphs.
Noting that
$\C{m}{2} =\C{n}{2} -\xx(n-\xx)-\C{\xx}{2}$,
we find that, for any constant $\gc <1- H(\gd)-\gd$,
the expression in (\ref{big1}) is (for large $n$) less than
$$
2^{\C{n+1}{2}}
\sum_{\xx}\sum_{\uk}\exp_2[-\gc n\xx - \frac{\gd n}{7 g(\uk)}\sum k_x]
<2^{-\Omega (n)} b(n).
$$

\end{proof}

\section{Proof of Theorem \ref{SATthm}}

This is now easy.
We have
\[
|\mathcal{P}(n)|= |\cup\{\pee(G):G\in \B(n)\}|+
|\cup\{\pee(G):G\in \f(n)\sm\B(n)\}|
\]
Here the first term on the r.h.s. is asymptotic to $2^{\C{n+1}{2}}$ by (\ref{BB}) and
Lemmas \ref{L2ors} and \ref{Llast}; so we just need to show
that the second is $o(b(n))$.
Moreover, according to Lemma \ref{LE'} and (\ref{trivialUB}), it's
enough to show this when we restrict to $G$ with $\gc(G) \leq C'\sqrt{n}\log^{3/2}n$ ($C'$ as in Lemma \ref{LE'}).
Thus the theorem will follow from
\beq{last}
\sum
\{|\pee(G)|:G\in \f(n)\sm\B(n),\gc(G) \leq  C'\sqrt{n}\log^{3/2}n\}
<2^{-\gO (n)}b(n).
\enq
{\em Proof.}
For $G$ as in (\ref{last}) let $E' = E'(G)$ be a subset of $E(G)$ of
size at most $C'\sqrt{n}\log^{3/2}n$ with $G-E'$ BB.
To bound the sum in (\ref{last})---i.e. the
number of possibilities for a pair $(G,P)$ with $G$ as in (\ref{last})
and $P\in \pee(G)$---we consider two cases
(in each of which we use the fact
that if $P\in \pee(G)$,
then the poset generated by the
restriction of $P$ to $E(G)\sm E'$ belongs to $\pee(G-E')$).

For $G-E'$ not triangle-connected,
we may think of choosing $G-E'$ and $P'\in \pee(G-E')$,
which by Lemma \ref{Llast} can be done in at most $2^{-\Omega(n)} b(n)$ ways,
and then choosing $E'$ and extending $P'$ to $P\in \pee(G)$
(that is, choosing orientations for the edges of $E'$),
which can be done in at most $2^{o(n)}$ ways.

For $G-E'$ triangle-connected we specify $(G,P)$
by choosing:
$G$; $E'$; $P'\in \pee(G-E')$; and
$P$ extending $P'$ to $E(G)$.  The number of possibilities in the first
step is at most $2^{-\Omega( n)}b(n)$ by Theorem \ref{MC};
the numbers of possibilities in the
second and fourth steps are $2^{o(n)}$; and there are (by Lemma \ref{L2ors})
just two possibilities in step 3.\qed

\section{Questions}

One obvious question suggested in \cite{BBL} is
estimation of the number of $k$-SAT functions for other values of $k$.
Here fixed $k$ seems to us most interesting.
It is conjectured in \cite{BBL} that in this case the number of
$k$-SAT functions is $\exp_2[(1+o(1))\C{n}{k}]$, and we see no
reason not to expect
\begin{conj}
For fixed k the number of k-SAT functions of n variables
is asymptotically $\exp_2[n +\C{n}{k}]$.
\end{conj}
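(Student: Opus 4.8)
This conjecture is still open for $k\ge 3$, so what follows is a plan of attack rather than a sketch of a known proof. \emph{The lower bound} should come, exactly as for $k=2$ in (\ref{2S}), from the obvious construction: for each orientation $y_i\in\{x_i,\bar x_i\}$ ($i\in[n]$) and each nonempty $\mathcal S\subseteq\binom{[n]}{k}$, the formula $\bigwedge_{S\in\mathcal S}\bigvee_{i\in S}y_i$ defines a function, and distinct pairs $(\vec y,\mathcal S)$ give distinct functions (check this via the assignment setting $y_i=0$ for $i\in S$, $y_i=1$ otherwise, which falsifies $f$ iff $S\in\mathcal S$: this pins down $\mathcal S$ given $\vec y$, and a short separate argument recovers $\vec y$). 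This gives at least $2^n(2^{\binom nk}-1)\sim\exp_2[n+\binom nk]$ functions, so all the content is in the matching upper bound $\exp_2[(1+o(1))(n+\binom nk)]$, which has to improve on the trivial $\exp_2[2^k\binom nk]$ from counting formulae.

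\emph{The encoding.} Every $k$-SAT function is determined by its set of prime implicates, an antichain (in the containment order on literal-sets) of nontautological clauses of length at most $k$; I would record this as a $k$-uniform hypergraph on $[n]$ together with, for each hyperedge $S$, a ``color'' $\epsilon_S\in\{0,1\}^S$ (the coordinates negated in the corresponding clause), plus lower-order data for the prime implicates of length $<k$. As in \cite{BBL} and Section 2, the first step is to dispose of ``degenerate'' functions: those with a nonempty spine (a unit prime implicate), those with a prime implicate of length $<k$, and those whose length-$k$ prime-implicate hypergraph is anomalously sparse (small links); by a union bound over the location of the degeneracy, each such restriction should cut the exponent by $\Omega(n^{k-1})$, so the degenerate functions number $\exp_2[n+\binom nk-\Omega(n^{k-1})]$ and are absorbed. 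That reduces matters to ``elementary'' $k$-SAT functions, whose prime implicates all have length exactly $k$ and which carry no local redundancy; for these the $2^n$ should correspond to a global orientation $\vec y$ and the $\binom nk$ to the choice of hyperedge set.

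\emph{The structural heart.} One wants an analogue of Theorem \ref{OBTFthm}: all but a $2^{-\Omega(n)}$ fraction of the colored $k$-uniform hypergraphs arising from elementary functions are ``blue-$k$-partite'', i.e.\ admit an orientation $\vec y$ of $[n]$ for which every clause equals $\bigvee_{i\in S}y_i$ up to the unavoidable symmetries; and---as in Sections 4 and 5---for such a function the underlying formula should be essentially determined, the fibre over a typical colored hypergraph having size $O(1)$ (this is where the final factor $2^n$ rather than $2^{n-1}$ comes from). The plan then mirrors Sections 3--5: isolate a short list of ``anomalies'' (failures of ``for most $(k{-}1)$-sets the link hypergraph covers most of $[n]$''), bound by entropy the number of hypergraphs with many anomalies (the analogue of Lemmas \ref{Xdoi}--\ref{LX3}, each ``forced'' clause at a low-link vertex saving a constant factor because the colors already present on a few clauses through a $(k{+}1)$-set determine the color of another by an OBTF-type consistency condition), show anomaly-free hypergraphs admit a small vertex set whose deletion makes them blue-$k$-partite (the analogue of Lemma \ref{lbb}), and finally run a ``$(k{+}1)$-set-connectedness'' argument (the analogue of triangle-connectedness and Lemmas \ref{L2ors}--\ref{Llast}) to bound the fibres.

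\emph{The main obstacle.} The hard part, and the reason $k\ge 3$ remains open, is precisely this structural step. For $k=2$ the three edges of a triangle determine one another's orientation through a single resolution, which is exactly what makes triangle-components and the clean relation ``$\equiv$'' of Section 4 do all the work; for $k\ge 3$ the ``a few clauses force another clause'' phenomenon is far weaker and much more global, so it is not even clear what the right notion of ``blue-$k$-partite'' or the right $2^{-\Omega(n)}$-typical structure should be, nor whether one obtains a constant entropy saving per forced clause at the relevant density. I would expect the elementary entropy counting used here to be insufficient, and that controlling the link hypergraphs and pushing through the ``almost all are blue-$k$-partite'' statement would instead require hypergraph regularity, or more plausibly the hypergraph container method.
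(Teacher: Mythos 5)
There is nothing in the paper to compare your proposal against: the statement you were given is posed in Section 7 as an open conjecture, and the paper offers no proof of it (nor does it claim one). Your proposal correctly recognizes this, and what you have written is a research plan rather than a proof; judged as a proof of the stated asymptotics it has a genuine gap, namely the entire upper bound, but that gap is exactly the open problem. The lower-bound half is sound and is the direct generalization of (\ref{2S}): your falsifying-assignment argument does recover $\mathcal S$ from the function once $\vec y$ is fixed, with the small caveat that $\vec y$ itself is not recoverable when some variable appears in no clause of $\mathcal S$ (flipping such a $y_i$ gives the same function); since the number of such $\mathcal S$ is at most $n\,2^{\binom{n-1}{k}}$, this does not affect the asymptotic bound $(1-o(1))2^{n+\binom nk}$, but the clean claim ``distinct pairs give distinct functions'' needs that qualification, just as the parenthetical after (\ref{2S}) does for $k=2$.

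On the upper-bound plan: your diagnosis of where the $k=2$ argument breaks is the right one. The proof in this paper leans entirely on the rigidity coming from triangles---three mutually resolving clauses whose orientations determine one another, which is what makes the OBTF condition, the constant-factor entropy savings in Lemmas \ref{Xdoi}--\ref{LX5}, and the triangle-component analysis of Section 4 work. For $k\ge 3$ a single resolution step involves clauses on $(2k-1)$ variables and does not force orientations in the same local way, so neither the analogue of ``blue-bipartite'' nor the analogue of ``triangle-connected'' is clear, and you are right that the elementary entropy/counting style of Sections 3--5 is unlikely to suffice on its own; expecting to need container- or regularity-type machinery for the structural ``almost all are nicely structured'' statement is a reasonable assessment, and is consistent with how later work on this problem has proceeded. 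But none of this constitutes a proof, and you should present it explicitly as a programme, not as a sketch of one.
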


We would also like to mention one natural question
that, surprisingly, seems not to have been considered previously:
\begin{question}
How many posets can have the same n-vertex cover graph?
\end{question}

\nin
Though it doesn't even seem obvious that the answer here
is $2^{\omega(n)}$, a construction of Graham Brightwell 
\cite{Brightwell}
gives a lower bound $(c\log n/\log\log n)^n$.

\bn
{\bf Acknowledgment}  
Thanks to the referee for a careful reading.

\bigskip
\begin{flushleft}
Department of Mathematics\\
Rutgers University\\
Piscataway NJ 08854 USA\\
ilinca@math.rutgers.edu\\
jkahn@math.rutgers.edu
\end{flushleft}

\end{document}